\newtheorem{theorem}{Theorem}[section]
\newtheorem{lemma}[theorem]{Lemma}
\newtheorem{corollary}[theorem]{Corollary}
\newtheorem{assumption}{Assumption A}
\theoremstyle{definition}
\numberwithin{equation}{section}
\newcommand{\N}{\mathcal{N}}
\newcommand{\D}{\mathcal{D}}
\newcommand{\A}{\mathcal{A}}
\begin{document}
	
\title{IPAS: An Adaptive Sample Size Method for Weighted Finite Sum Problems with Linear Equality Constraints }

\author{Nata\v{s}a Kreji\'c\footnote{Department of Mathematics and Informatics, Faculty of Sciences, University of Novi Sad, Trg Dositeja Obradovi\' ca 4, 21000 Novi Sad, Serbia. e-mail: \texttt{natasak@uns.ac.rs}}, Nata\v{s}a Krklec Jerinki\'c \footnote{Department of Mathematics and Informatics, Faculty of Sciences, University of Novi Sad, Trg Dositeja Obradovi\' ca 4, 21000 Novi Sad, Serbia. e-mail: \texttt{natasa.krklec@dmi.uns.ac.rs}}, Sanja Rapaji\'c\footnote{Department of Mathematics and Informatics, Faculty of Sciences, University of Novi Sad, Trg Dositeja Obradovi\' ca 4, 21000 Novi Sad, Serbia. e-mail: \texttt{sanja@dmi.uns.ac.rs}},
Luka Rute\v{s}i\'c \footnote{Department of Mathematics and Informatics, Faculty of Sciences, University of Novi Sad, Trg Dositeja Obradovi\' ca 4, 21000 Novi Sad, Serbia. e-mail: \texttt{luka.rutesic@dmi.uns.ac.rs}} \footnote{Corresponding author}}  
\date{}
\maketitle

\begin{abstract}
Optimization problems with the objective function  in the form of weighted sum and linear equality constraints are considered. Given that the number of local cost functions can be large as well as the number of constraints, a stochastic optimization method is proposed. The method belongs to the class of variable sample size first order methods, where the sample size is adaptive and governed by the additional sampling technique earlier proposed in the unconstrained optimization framework. The resulting algorithm may be a mini-batch method, increasing sample size method, or even deterministic in a sense that it  eventually reaches the full sample size,  depending on the problem and similarity of the local cost functions. Regarding the constraints, the method uses controlled, but inexact projections on the feasible set, yielding possibly infeasible iterates.  Almost sure convergence is proved under some standard assumptions for the stochastic framework, without imposing the convexity. 
Numerical results on relevant machine learning experiments, i.e.,
real-world data sets for logistic regression problems,  show that the proposed algorithm is competitive with the state-of-the-art methods. 

\end{abstract}

\textbf{Key words:} 
Constrained Optimization, Projected Gradient Methods,  Sample Average Approximation, Adaptive Variable Sample Size Strategies, Nonmonotone Line Search, Additional Sampling.

\section{Introduction}

 We consider constrained optimization problems with the objective function in the form of weighted finite sum and linear equality constraints, i.e., 

 \begin{equation} \label{problem}
\min_{Ax=b} f(x):=\sum_{i=1}^{N} w_i f_i(x),
\end{equation}
where $ f_i:\mathbb{R}^n \to \mathbb{R}, i=1,...,N$ are continuously-differentiable functions, $w_1,...,w_N$ represent the weights such that  
\begin{equation} \label{w}
    \sum_{i=1}^{N} w_i=1, \quad w_i\geq 0, \; i=1,..,N,
\end{equation}
$b\in \mathbb{R}^m$ and $A \in \mathbb{R}^{m \times n}$ is assumed to be a full-rank matrix,  $rank(A)=m \leq n$. 

   The considered problems come from different fields, mainly including Big Data problems commonly present in Machine Learning (ML) \cite{EJORsurvey}. 
  Many of ML problems are in the form of finite sum optimization problems, so a large class of stochastic methods for constrained finite sum optimization problems are proposed in many papers (\cite{LevelSetMethods, FrankWolfe} to name just a few).
  Notice that the  choice of $w_i=1/N$ for all $i=1,...,N$ yields the standard form of the finite sum objective function $f(x)=\frac{1}{N} \sum_{i=1}^{N} f_i(x)$.
  Optimization problems with this objection function and linear constraints have gained significant attention in practical applications (see \cite{EJORcomposite}).   
  Introduction of possibly different weights is motivated by the so called local regression models (see \cite{UMU} for instance) in ML where the model parameters are recalculated for any new data point depending on its position in the space of attributes. In such approach, the objective function usually takes into account the distance between the new point and  data points from the training data set. The aforementioned distances represent the weights in problem \eqref{problem}.  On the other hand, the weights $w_i$ can be viewed as probabilities of choosing the corresponding functions $f_i$ and, in that case,  the objective function represents mathematical expectation. The motivation for  emphasizing weights and observing them separately from  the functions $f_i$ comes from stochastic framework. Namely, this allows  stochastic algorithms to favor the functions that are more important by giving them better chances to be chosen as explained in the sequel of the paper. Moreover, some of the ML problems also include linear constraints. One of the examples is a  Ridge regression   which can be stated in the form of constrained optimization problem \cite{UMU}. Another example would be  Markowitz utility function minimization, i.e., the problems of finding optimal portfolio  that minimizes the risk and maximizes the return, while the sum of unknowns must be equal to one.  The two mentioned examples include only modest number of constraints and the projection on the feasible set is not a big challenge. However,  data fitting problems in general (e.g. least squares) may also include a large number of (linear) constraints (see e.g. \cite{LSEC} for further references). In that case, projecting on the feasible set is too expensive and inexact projections may be a better option. 

A variety of line search and trust region methods have been proposed to solve nonlinear constrained problems. 
Various algorithms have been designed to solve deterministic equality-constrained optimization problems (see \cite{kapica, Brust}
for further references),
while recent research has focused on developing stochastic optimization algorithms. 
There has been a growing interest in adapting line search and trust region methods in stochastic framework for unconstrained optimization problems \cite{pregledni, SBNKNKJMR, BellaviaKrejicMorini,  BlanchetCartis, Bollapragada, ChenMenickelly, LSOS,  iusem, NKNKJ2, asntr, KLOS, 
lsnmbb}, but
significantly fewer algorithms have been proposed to solve stochastic equality-constrained
optimization problems (see \cite{kapica} for further references and \cite{BerahasXieZhou, CurtisRobinsonZhou, kolar, OztoprakByrdNocedal, SunNocedal, wang curtis}).

Projected gradient methods can be used to solve constrained optimization problems, (see \cite{Birgin1}, \cite{ROSEN}). Their  generalization to stochastic framework have been investigated in \cite{NKNKJ, loreto}.
A novel class of projected gradient methods for constrained minimization both in deterministic and stochastic settings is proposed in \cite{LanLiXu}. Large scale problems require inexact projections because of computational cost.

Numerous first order and second order algorithms have been developed for solving unconstrained finite sum minimization \cite{pregledni, SBNKNKJMR, BellaviaKrejicMorini, LSOS, asntr, lsnmbb}.
These methods are based on nonmonotone line search or trust region technique.
The usefulness of line search nonmonotonicity in deterministic case 
\cite{grippo,li}
was also demonstrated in a stochastic case. A class of algorithms which uses nonmonotone line search rule fitting a variable sample size scheme at each iteration was proposed  in \cite{NKNKJ2}.

In large scale problems, the computation of the objective function and its gradient (and additionally Hessian if needed) is expensive, so their approximations are generally used in order to reduce the computational cost.  Sampling-based approaches have long played an important role in stochastic optimization and stochastic programming \cite{SHP}. Subsampling is a natural way of computing these approximations, and adaptive subsampling appears to be particularly suited to large dimensional problem in the form of finite sums. 

Adaptive sample size strategies for finite sum problems are presented in 
\cite{pregledni, SBNKNKJMR, LSOS, asntr, lsnmbb}
and some other types of adaptive subsample approach can also be found in \cite{BellaviaKrejicMorini, BlanchetCartis, Bollapragada,ChenMenickelly,  NKNKJ}.

Aditional sampling (two step sampling in each iteration) presented in \cite{LSOS, asntr, lsnmbb}   
plays an important role in the sample size scheduling. 
It is used as a control for accepting the step and increasing the sample size if necessary.
 The additional sampling can be arbitrarily cheap, i.e., even the sample size 1 is sufficient, and hence 
it does not make the process more expensive. 
Other additional line search and trust region sample size strategies can be found in \cite{iusem,KLOS}.

The method we propose here belongs to the family of projected gradient methods, with the step size determined by a nonmonotone line search rule. The specific form of the objective function, in particular the case of large $ N $ and $ n $ motivates the adaptive subsample approach. Thus we work with approximate objective function (and the corresponding gradient), in other words with  random linear models. The sample size that defines the model in each iteration is computed according to the  estimated  progress towards minimizer in each iteration. The approximate gradient direction is then projected inexactly to the feasible set, yielding a new iteration that might be infeasible in a controlled way. Inexact projection in fact means that we will solve the corresponding system of linear equations only approximately, using any linear solver. The progress is measured by an additional approximation of the objective function, which can be very rough, in fact even the sample of size 1 will be suitable for this additional objective function approximation. Besides the measure of progress the additional sampling done in each iteration allows us to overcome theoretical difficulties that arise from the fact that the direction and step size are not independent random variables and hence one cannot apply the martingale theory for theoretical analysis. Such difficulties can be overcame with the  predetermined step size, for instance $ 1/k, $ which yields a.s. convergence in stochastic gradient descent framework (see e.g. \cite{pregledni} for further references). But in that case the step sizes become very small very fast and hence the method is very slow. Furthermore, the method we propose here is adaptive in the sense that sample size increase is problem dependent. Roughly speaking the similarity of functions $ f_i $ governs the process and the iterative procedure might end without reaching the full sample, i.e., with very cheap iterations, or the exact objective function and the gradient are used at the final stages of the iterative procedure. 

Random linear models that we use can be more or less similar to the original function $ f $ and thus the decrease of the model might not be a decrease for the true objective function. Therefore we use a nonmonomotone line search procedure \cite{grippo,li}, that is more relaxed in accepting the step than the classical Armijo rule. The same approach is exploited in several methods with random models, \cite{SBNKNKJMR, LSOS, NKNKJ, lsnmbb}. 
Putting together the random linear models, nonmonotone line search procedure, additional sampling and inexact projections we end up with an efficient and theoretically sound method that is almost surely (a.s.) converging to a stationary point of \eqref{problem}. 

 To summarize, our main contributions may be listed as follows:
\begin{itemize}
    \item The additional sampling concept for solving unconstrained finite sum problems \cite{LSOS, asntr, lsnmbb} is incorporated into the stochastic method to solve constrained optimization problems with weighted sum objective functions. 
    \item The proposed method relaxes the common assumption of feasible iterates in stochastic projected gradient framework (e.g. \cite{NKNKJ}) and allows controlled, but inexact projections which can be of great significance for problems with large number of constraints.   
    \item Almost sure convergence is proved   under rather standard assumptions for stochastic optimization framework. 
    \item The efficiency of the proposed method is confirmed through a number of tests performed on both academic and  real-world data problems. 
\end{itemize}

The paper is organized as follows.  Section 2 contains basic definitions and statements known from the literature.  It also provides some basic concepts and preliminary results needed for further analysis. The proposed method - IPAS is stated and explained in Section 3, while the convergence analysis is delegated to Section 4. Section 5 is devoted to numerical results, while the main conclusions are derived in the  final section.

\section{Preliminaries}

Let us start with explaining the inexact projections we will use in the algorithm. 
Under the assumption of fully ranked $A$, one can show that the orthogonal projection $\pi_{S}(y)$ of a point $y$ on the feasible  set $S:=\{x \in \mathbb{R}^n \; | \; Ax=b\}$ is given by 
\begin{equation}
    \label{tacnaprojekcija}
    \pi_{S}(y)=y-A^T(AA^T)^{-1} (Ay-b).
\end{equation}
The above equality comes from the fact that $\pi_{S}(y)=argmin_{Ax=b} \frac{1}{2} \|y-x\|^2$ and, since the orthogonal projection problem is convex, the solution is determined by the KKT conditions 
$$A \; \pi_{S}(y)=b, \quad A^T \lambda=y - \pi_{S}(y),$$
where $\lambda$ represents the vector of Lagrange multipliers. 
Multiplying the second equation with $A$ from the left and using the first one,  we obtain 
\begin{equation}
    \label{kkt2}
   AA^T \lambda= Ay-b.
\end{equation}
This together with  
$\pi_{S}(y)=y-A^T \lambda$ yields \eqref{tacnaprojekcija}. 
The important feature for our analysis lies in the fact that the projection operator to the set $ S $ has the following property. 
\begin{lemma}
    \label{lemaprojekcije}
    Let $ A \in \mathbf{R}^{m \times n }$ with $ rank(A) = m $ and $ w_i \geq 0, i=1,\ldots, N, \\ \sum_{i=1}^{N} w_i = 1. $ For any set of points $ y^i \in \mathbf{R}^n, i=1,\ldots,N $ there holds
 \begin{equation} \label{proconv} \pi_{S}(\sum_{i=1}^{N} w_i y^i) =  \sum_{i=1}^{N} w_i \pi_{S}(y^i).  \end{equation}
    \end{lemma}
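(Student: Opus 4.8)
The plan is to read off from the explicit formula \eqref{tacnaprojekcija} that $\pi_S$ is an \emph{affine} map of $y$, and then to use the elementary fact that affine maps commute with affine combinations. The hypotheses of the lemma are exactly what is needed: the full-rank assumption $\mathrm{rank}(A)=m$ guarantees that $AA^T\in\mathbb{R}^{m\times m}$ is invertible, so the right-hand side of \eqref{tacnaprojekcija} is well defined, and the normalization $\sum_{i=1}^N w_i=1$ from \eqref{w} is precisely what makes $\sum_{i=1}^N w_i y^i$ an affine (not merely linear) combination of the $y^i$.

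Concretely, I would first rewrite \eqref{tacnaprojekcija} as $\pi_S(y)=Py+q$, where $P:=I-A^T(AA^T)^{-1}A$ and $q:=A^T(AA^T)^{-1}b$ are a fixed matrix and vector, independent of $y$. Then the computation is immediate:
\begin{equation*}
\pi_S\Big(\sum_{i=1}^N w_i y^i\Big)=P\sum_{i=1}^N w_i y^i+q=\sum_{i=1}^N w_i\,Py^i+\Big(\sum_{i=1}^N w_i\Big)q=\sum_{i=1}^N w_i\big(Py^i+q\big)=\sum_{i=1}^N w_i\,\pi_S(y^i),
\end{equation*}
where the third equality uses $\sum_{i=1}^N w_i=1$ to distribute the constant term $q$ across the sum. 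This is exactly \eqref{proconv}.

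There is essentially no hard part here: the argument is a one-line verification. The only point requiring a little care is to notice that $\pi_S$ is affine rather than linear, so that $\sum_i w_i=1$ is genuinely used (and, conversely, $w_i\ge 0$ is not needed for this particular identity). If one preferred an argument closer in spirit to the KKT derivation given just above the lemma, one could instead set $p:=\sum_{i=1}^N w_i\pi_S(y^i)$, check feasibility $Ap=\sum_i w_i A\pi_S(y^i)=\big(\sum_i w_i\big)b=b$, and verify the orthogonality condition $\sum_i w_i y^i-p=\sum_i w_i\big(y^i-\pi_S(y^i)\big)=A^T\big(\sum_i w_i\lambda^i\big)\in\mathrm{range}(A^T)$; uniqueness of the orthogonal projection then forces $p=\pi_S(\sum_i w_i y^i)$. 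I would present the affine-map computation, as it is the shortest and makes transparent the role of each hypothesis.
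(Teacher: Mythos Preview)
Your proof is correct and essentially identical to the paper's: both start from the closed-form \eqref{tacnaprojekcija}, use linearity in $y$ together with $\sum_{i=1}^N w_i=1$ to split the constant term $b$, and regroup. Your presentation simply names the affine structure $\pi_S(y)=Py+q$ explicitly, which makes the role of $\sum_i w_i=1$ (and the irrelevance of $w_i\ge 0$) a bit more transparent, but the underlying computation is the same.
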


{\em Proof. } 
Using \eqref{tacnaprojekcija}, there follows  \begin{eqnarray*}
    \pi_{S}(\sum_{i=1}^{N} w_i y^i)&=& \sum_{i=1}^{N} w_i y^i-A^T(AA^T)^{-1} (A\sum_{i=1}^{N} w_i y^i-b)\\
    &=& \sum_{i=1}^{N} w_i y^i-A^T(AA^T)^{-1} (A\sum_{i=1}^{N} w_i y^i-\sum_{i=1}^{N} w_ib)\\
    &=& \sum_{i=1}^{N} w_i (y^i-A^T(AA^T)^{-1} (A y^i-b))=\sum_{i=1}^{N} w_i \pi_{S}(y^i).
\end{eqnarray*} 
$ \Box$

To compute the exact projection on the set $ S $  one needs to solve the system \eqref{kkt2} exactly. In some applications, if  the number of equalities is modest one can use the closed form \eqref{tacnaprojekcija} for any given point $y$. However, if the dimension of the problem is very large and/or the number of equality constraints is large,  finding the exact solution to \eqref{kkt2} can be impractical. Thus,  we  will assume that the linear system is solved only approximately. The quality of inexact projection in each iteration will be controlled by the norm of the residual vector defined as follows. Let us denote by $\tilde{\pi}_S(y) $ the inexact projection of point $y$ on feasible set $S$, more precisely,
\begin{equation} \label{ptilda} 
     \tilde{\pi}_S(y)=y-A^T\tilde{\lambda}(y),
\end{equation}
where $\tilde{\lambda}(y)$ is an approximate solution of \eqref{kkt2}. The residual is denoted by  
\begin{equation} \label{ptildarez}  
r(y):=AA^T \tilde{\lambda}(y)-Ay+b,
\end{equation}
while the feasibility measure of  point $ y$ is defined as
\begin{equation}
    \label{emera}
    e(y) = \|Ay-b\|. 
\end{equation}
We will state the condition for the residual vector in each iteration a bit ahead. The following simple lemma will be used later on. 
\begin{lemma}
\label{projekcija}
    Let $ z \in \mathbb{R}^n$ be an arbitrary point and $\tilde{\lambda}(z) $ be an approximate solution of \eqref{kkt2} such that $\|r(z)\| \leq M $ with $ M>0.$ Then $e(\tilde{\pi}_{S}(z))\leq M. $
\end{lemma}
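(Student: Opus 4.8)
The plan is to verify the claim by a direct substitution, exploiting the fact that the residual vector $r(z)$ defined in \eqref{ptildarez} is precisely (up to sign) the feasibility violation of the inexact projection $\tilde{\pi}_S(z)$. First I would write out $A\,\tilde{\pi}_S(z) - b$ using the definition \eqref{ptilda} of $\tilde{\pi}_S(z)$, namely $\tilde{\pi}_S(z) = z - A^T\tilde{\lambda}(z)$, to obtain
\begin{equation*}
A\,\tilde{\pi}_S(z) - b = A\big(z - A^T\tilde{\lambda}(z)\big) - b = Az - AA^T\tilde{\lambda}(z) - b.
\end{equation*}
Then I would compare the right-hand side with \eqref{ptildarez} and observe that it equals $-\big(AA^T\tilde{\lambda}(z) - Az + b\big) = -r(z)$.

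Having established $A\,\tilde{\pi}_S(z) - b = -r(z)$, the conclusion follows by taking norms: by the definition \eqref{emera} of the feasibility measure,
\begin{equation*}
e(\tilde{\pi}_S(z)) = \|A\,\tilde{\pi}_S(z) - b\| = \|-r(z)\| = \|r(z)\| \le M,
\end{equation*}
where the last inequality is the hypothesis of the lemma. This completes the argument.

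There is essentially no obstacle here: the lemma is an immediate algebraic identity once one unwinds the definitions \eqref{ptilda}, \eqref{ptildarez} and \eqref{emera}, and the only "content" is the recognition that the residual used to control the accuracy of the linear solve in \eqref{kkt2} is exactly the quantity that measures how far the resulting inexact projection is from the feasible set $S$. I would keep the proof to the two displayed lines above, perhaps prefaced by the remark that full-rank-ness of $A$ plays no role in this particular statement (it only guarantees that the exact system \eqref{kkt2} is solvable), so the bound holds for any approximate multiplier $\tilde{\lambda}(z)$ whatsoever, the quality of the projection being entirely encoded in $\|r(z)\|$.
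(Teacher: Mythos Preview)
Your proof is correct and follows essentially the same approach as the paper: direct substitution of the definition of $\tilde{\pi}_S(z)$ into $e(\tilde{\pi}_S(z))$ to recognize $-r(z)$. Your version is in fact slightly cleaner, since the paper takes an unnecessary detour through writing $\tilde{\lambda}(z) = (AA^T)^{-1}(Az - b + r(z))$ before simplifying, whereas you go straight from $Az - AA^T\tilde{\lambda}(z) - b$ to $-r(z)$; your remark that full rank of $A$ is not actually used here is accurate and worth keeping.
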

{\em Proof.} The condition 
$$ \|r(z)\| =\|A A^T \tilde{\lambda}(z) - Az + b \| \leq M$$ implies
\begin{eqnarray*}
e(\tilde{\pi}_{S}(z)) &=& \|A \tilde{\pi}_{S}(z) - b \|  =  \|A(z-A^T \tilde{\lambda}(z))-b\| \\
& = & \|Az-b-AA^T((AA^T)^{-1}(Az-b+r(z)))\| \\
& = & \|Az - b - Az + b - r(z)\| =\|r(z)\| \leq M,  
\end{eqnarray*}
which completes the proof. $\Box $

One can show that the projected gradient  direction of the form 
\begin{eqnarray}
    \label{pgd}
    d(x)=\pi_{S}(x-\nabla f(x))-x
\end{eqnarray}
is a descent direction for function $f$ at point $x \in S$ unless $x$ is a stationary point for problem \eqref{problem}. More precisely, the following result is known. 
\begin{theorem} \cite{Birgin1}
    \label{bmr} Assume that $f \in C^1(S)$ and $x \in S$. Then the projected gradient direction \eqref{pgd} satisfies: 
    \begin{itemize}
        \item[a)] $d(x)^T \nabla f(x) \leq -\|d(x)\|^2,$
        \item[b)] $d(x)=0$ if and only if $x$ is a stationary point for problem \eqref{problem}.
    \end{itemize}
\end{theorem}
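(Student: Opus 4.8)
The plan is to rely on the classical obtuse-angle characterisation of the Euclidean projection onto a closed convex set, specialised to the affine set $S$. Set $y := x - \nabla f(x)$ and $p := \pi_{S}(y)$, so that $d(x) = p - x$. Since $S$ is convex and nonempty, $p = \pi_{S}(y)$ is characterised by $p \in S$ together with $(y-p)^T(z-p) \le 0$ for every $z \in S$ (and in fact with equality here, because $S$ is affine, though the inequality already suffices for part a)). To obtain part a), I would simply take the admissible point $z = x \in S$. Using $y - p = (x-\nabla f(x)) - p = -d(x) - \nabla f(x)$ and $x - p = -d(x)$, the inequality $(y-p)^T(x-p) \le 0$ becomes $\|d(x)\|^2 + \nabla f(x)^T d(x) \le 0$, i.e. $\nabla f(x)^T d(x) \le -\|d(x)\|^2$, which is exactly a).

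For part b) I would work directly from the closed form \eqref{tacnaprojekcija}. With $y = x - \nabla f(x)$ and $Ax = b$ (which holds since $x \in S$) one gets
\begin{equation*}
d(x) = \pi_{S}(x-\nabla f(x)) - x = \big(A^T(AA^T)^{-1}A - I\big)\nabla f(x) = -P\,\nabla f(x),
\end{equation*}
where $P := I - A^T(AA^T)^{-1}A$ is the orthogonal projector onto the null space $\N(A) = \{v \in \mathbb{R}^n : Av = 0\}$. Hence $d(x) = 0$ if and only if $P\nabla f(x) = 0$, that is, $\nabla f(x) \in \N(A)^{\perp} = R(A^T)$, which is equivalent to the existence of a multiplier $\lambda \in \mathbb{R}^m$ with $\nabla f(x) = A^T\lambda$. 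Since $x$ is feasible and $A$ has full rank, this is precisely the first-order (KKT) stationarity condition for \eqref{problem} at $x$; thus both implications in b) follow simultaneously.

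I do not expect a genuine obstacle here: the argument reduces to the one-line substitution above once two standard facts are in place, namely that the notion of stationary point for the equality-constrained problem \eqref{problem} coincides with the algebraic condition $\nabla f(x) \in R(A^T)$, and that on an affine set the projection variational inequality holds with equality. The only thing to be vigilant about is keeping the sign conventions consistent when expanding $y - p$ and $x - p$; all remaining manipulations are routine linear algebra with the projector $P$.
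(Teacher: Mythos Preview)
Your proof is correct. Note, however, that the paper does not give its own proof of this theorem: it is stated as a known result from \cite{Birgin1} and used as a black box throughout the convergence analysis. Your argument is the standard one (and essentially the one in \cite{Birgin1}): the variational characterisation of the projection applied at $z=x$ yields a), while for b) you exploit the explicit formula \eqref{tacnaprojekcija} available here because $S$ is affine, reducing stationarity to $\nabla f(x)\in R(A^T)$. Both parts are clean; the only cosmetic remark is that in b) the sign of the multiplier is immaterial since $R(A^T)$ is a subspace, so writing $\nabla f(x)=A^T\lambda$ rather than $\nabla f(x)+A^T\lambda=0$ is harmless.
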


The method we propose will be based on approximate objective function and the gradient. Let us denote by $f_{\N_k}$ the approximation of function $f$ used in iteration $k$, i.e., 
\begin{equation} \label{fnk}
f_{\N_k}(x):=\frac{1}{N_k}\sum_{i\in \N_k}  f_i(x),
\end{equation}
where $N_k:=| \N_k|$, $\N_k=\{i^k_1,...,i^k_{N_k}\}$, and  each $i^k_j \in \N_k$ takes the value $s \in \N:=\{1,...,N\}$ with probability $w_s$, i.e., 
\begin{equation} \label{Nk} P(i^k_j=s)=w_s, s=1,...,N, j \in \N_k, k \in \mathbb{N}.
\end{equation}
 This way we have an unbiased estimate of $f$, i.e., 
$$E(f_{\N_k}(x)|x)=\frac{1}{N_k}\sum_{j=1}^{N_k}  E(f_{i^k_j}(x)|x)=\frac{1}{N_k}\sum_{j=1}^{N_k}  f(x)=f(x).$$

As already stated,  in the proposed algorithm we use inexact projections of the approximate gradient $ \nabla f_{\N_k} $ 
which can yield nondescent directions $p_k$ and infeasible points $x_k$. Moreover, we deal with approximate functions and thus imposing monotone line search is not beneficial in general. Thus, we use nonmonotone Armijo-type line search \cite{li}, to determine the step size $t_k$ 
\begin{equation} \label{nonm} f_{\N_k}(x_k+t_k p_k)\leq f_{\N_k}(x_k)  +
c_1 t_k (\nabla f_{\N_k}(x_k))^T p_k+\varepsilon_k,
\end{equation}
with some $\varepsilon_k$ which satisfies 
\begin{equation} \label{sumaeps}   \sum_{k=0}^{\infty} \varepsilon_k \leq \bar{\varepsilon} < \infty, \;\;\; \varepsilon_k>0.
\end{equation}

Clearly, the direction $p_k $ and the step size $ t_k $ obtained in \eqref{nonm} both depend on $ \N_k. $ Therefore we can not rely on the    martingale  theory commonly used  in stochastic gradient  methods such as SGD \cite{SGD} where the  predetermined step size sequence  is used.  To overcome this difficulty and avoid predefined step sizes we employ the idea of additional sampling.  This simple and computationally cheap remedy is successfully used in \cite{LSOS, lsnmbb}. Other possibilities are given in \cite{iusem, KLOS}.
The method proposed in \cite{asntr} also uses additional sampling but in trust-region framework.

Similarly as for $f_{\N_k}$, we form an additional sampling approximation $f_{\D_k}$  by
\begin{equation} \label{fdk}
f_{\D_k}(x):=\frac{1}{D_k}\sum_{i\in \D_k}  f_i(x),
\end{equation}
where $D_k:=| \D_k|$, $\D_k=\{l^k_1,...,l^k_{D_k}\}$, and  each $l^k_j \in \D_k$ takes the value $s \in \N:=\{1,...,N\}$ with probability $w_s$, i.e., 
\begin{equation} \label{Dk} P(l^k_j=s)=w_s, s=1,...,N, j \in \D_k, k \in \mathbb{N}.
\end{equation}
The key point for the efficiency of this approach lies in the fact that the cardinality of $ \D_k$ is arbitrary, with only requirement being $ D_k\leq N-1.$ Thus one can even take $ D_k=1$ in each iteration. The numerical experiments presented in Section 4 are performed with $ D_k=1. $

The following technical lemmas will be used further on.

\begin{lemma} \label{konvergencija}
 Let $ e_{k+1} \leq \theta e_k + \eta_k$ for all $k \geq 0$ with $  e_k\geq 0, k=1,2,..., \theta \in [0,1)$ and $ \{\eta_k\}$   satisfying  $ \lim_{k \to \infty}  \eta_k=0. $ Then $$ \lim_{k\to \infty} e_k = 0. $$ 
\end{lemma}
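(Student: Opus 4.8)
The plan is to run a $\limsup$ argument on the recursion, after first securing boundedness of the sequence so that the relevant $\limsup$ is finite. Since $\eta_k \to 0$, the sequence $\{\eta_k\}$ is bounded; fix $B \geq 0$ with $\eta_k \leq B$ for all $k$ and set $M := \max\{e_0,\, B/(1-\theta)\}$, which is well defined because $\theta \in [0,1)$. A one-line induction then shows $e_k \leq M$ for every $k$: the base case is the definition of $M$, and if $e_k \leq M$ then $e_{k+1} \leq \theta e_k + \eta_k \leq \theta M + B \leq \theta M + (1-\theta)M = M$.

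Having established $0 \leq e_k \leq M$, let $L := \limsup_{k\to\infty} e_k \in [0,M]$. Taking $\limsup$ on both sides of $e_{k+1} \leq \theta e_k + \eta_k$, and using subadditivity of $\limsup$ together with $\limsup_k(\theta e_k) = \theta\,\limsup_k e_k$ (valid since $\theta \geq 0$) and $\lim_k \eta_k = 0$, one obtains
$$L = \limsup_{k} e_{k+1} \leq \theta\,\limsup_{k} e_k + \limsup_{k} \eta_k = \theta L.$$
Thus $(1-\theta)L \leq 0$, and since $1-\theta > 0$ this forces $L \leq 0$; combined with $L \geq 0$ (all $e_k \geq 0$) we get $L = 0$. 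Finally $0 \leq \liminf_k e_k \leq \limsup_k e_k = 0$ gives $\lim_{k\to\infty} e_k = 0$, as claimed.

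A more hands-on alternative is to unroll the recursion to $e_{k+1} \leq \theta^{k+1} e_0 + \sum_{j=0}^{k} \theta^{k-j}\eta_j$, note $\theta^{k+1}e_0 \to 0$, and show the convolution term vanishes by splitting the sum at an index $J$ beyond which $\eta_j$ is small: the head $\sum_{j<J}\theta^{k-j}\eta_j$ is at most $\theta^{k-J+1}$ times a fixed constant and so tends to $0$ for fixed $J$, while the tail is bounded by $\bigl(\sup_{j\geq J}\eta_j\bigr)\sum_{i\geq 0}\theta^i$. I do not expect any genuine obstacle here: the only delicate point in either route is the rigorous bookkeeping of the limit — the boundedness induction in the $\limsup$ proof, or the $\varepsilon$–$J$ splitting in the unrolling proof — and since the $\limsup$ version is shorter and cleaner, that is the one I would write out.
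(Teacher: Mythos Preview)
Your proof is correct. The $\limsup$ route you present as your primary argument is genuinely different from what the paper does: the paper takes exactly your ``hands-on alternative,'' unrolling the recursion to $e_k \leq \theta^k e_0 + s_k$ with $s_k = \sum_{j=1}^{k}\theta^{j-1}\eta_{k-j}$, and then, rather than carrying out the $\varepsilon$--$J$ splitting you sketch, simply cites an external result (Lemma~3.1(a) of Sundhar Ram--Nedi\'c--Veeravalli) for $s_k \to 0$. Your $\limsup$ argument has the advantage of being entirely self-contained and short, with no need for either the unrolling bookkeeping or an outside reference; the paper's version, on the other hand, yields the explicit nonasymptotic bound $e_k \leq \theta^k e_0 + s_k$, which could in principle be used for rate estimates (though the paper does not exploit this). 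Since your alternative sketch already matches the paper's strategy and fills in the step the paper outsources, either of your two write-ups would serve.
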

{\em Proof. } 
Applying the induction argument we can show that 
$$e_k\leq \theta^k e_0+s_k,$$
where $s_k=\sum_{j=1}^{k} \theta^{j-1} \eta_{k-j}$. Under the stated conditions there holds 
$\lim_{k \to \infty} s_k=0$ (see \cite[Lemma 3.1, (a)]{nedic}) and we conclude that $\lim_{k\to \infty} e_k=0$.

$ \Box $

\section{The Method}

 The method we consider will generate an infinite sequence of iterations $ x_k.$ As already stated in each iteration we will use a subsample $ \N_k$ and the corresponding function $ f_{\N_k}(x_k)$ and the gradient $ \nabla f_{\N_k}(x_k). $ The inexact projection $ \tilde{\pi}_S $ will be used to generate the search direction with the approximation error controlled by a nonincreasing sequence of  positive numbers $ \eta_k $ such that 
 \begin{equation}
    \label{etak}
    \sum_{k=0}^{\infty} \eta^2_k\leq \bar{\eta}<\infty.
\end{equation}
Thus, for $y_k=x_k-\nabla f_{\N_k}(x_k)$ we will require that the projection residuals satisfy 
\begin{equation}
    \label{approxlambda0}
   \|r(y_k)\|\leq \eta_k. 
\end{equation}
Clearly, the inexactness of the projection will decrease as $ k $ increases and eventually we will approach the feasible set.   The search direction $ p_k $ will be computed as usual, as the difference of inexact projection $\tilde{\pi}_{S}(y_k)$ and the current approximation $ x_k. $ After the computation of $ p_k$ we distinguish two cases. If $ N_k<N $ we proceed to determine the step size by the nonmonotone rule \eqref{nonm} and define $ \bar{x}_k = x_k + t_k p_k$. 
Adding $ \varepsilon_k $ in the decrease condition allow us to compute a suitable step size even if the direction  is nondecreasing and hence the line search rule \eqref{ls} is always well defined,  even without the lower bound $t_{min}$ posed in the mini-batch case of Step S3.  

In the case $ N_k=N, $ i.e., if the full precision in the objective function is needed,  the projection could still be inaccurate and hence we might search along infeasible direction. In that case we check if the search direction is decreasing. If yes, we proceed to the line search. Else, if the direction is not sufficiently decreasing, we discard the search direction and take a new projection of the current iteration $ x_k$ to get $ x_{k+1} $ and terminate the iteration. In this case we will call the iteration unsuccessful.  

For all iterations where $ N_k < N $ in Step 4 of the algorithm we perform additional sampling, taking a new sample $ \D_k,$ independently of $ \N_k.  $
As already mentioned, this step is meant to be a computationally cheap measure of progress as $ \D_k$ can be arbitrary small and we will work with $ D_k = 1$ in our numerical tests. For this new sample $ \D_k$ we compute the direction $ u_k = x_k - \nabla f_{\D_k}(x_k) $ and then project it approximately to get $ s_k = \tilde{\pi}_S(u_k)-x_k=-\nabla f_{\D_k}(x_k)-A^T \tilde{\lambda}_k(u_k).$ Here we keep the same accuracy of the projection as in Step 2 for $ p_k. $ Notice that for $ N_k = N$ this additional sampling is not needed as the line search in Step 3 is performed with the true objective function. 

Finally, at Step 5 we update the iteration.  
So, if we have enough decrease in the objective function $ f_{\D_k}, $ according to \eqref{ac}, we update the iteration and keep the same sample size for the next iteration. Roughly speaking we are saying here that $f_{\N_k}$ is a good approximation of the objective function as the decrease condition holds for another (independently sampled) function $ f_{\D_k}. $ Notice that the condition \eqref{ac}
 is looser than the step size rule \eqref{ls} as  $\varepsilon_k$ is multiplied with some constant $ C $ which can be large. 
 
\noindent {\bf Algorithm 1: IPAS} (Inexact Projection with Additional Sampling)
\label{SPGNS}
\begin{itemize}
\item[\textbf{S0}] \textit{Initialization.} Input: $x_0 \in \mathbb{R}^n, N_0 \in \mathbb{N}, \beta, c,c_1, t_{min} \in (0,1), C>0$, $\{\eta_k\}$ satisfying \eqref{etak}, $\{\varepsilon_k\}$ satisfying \eqref{sumaeps}, $k:=0$.
\item[\textbf{S1}] \textit{Subsampling.} If $N_k<N$, choose $\N_k$ via \eqref{Nk}. Else, set $f_{\N_k}=f$.
\item[\textbf{S2}] \textit{Search direction.} Compute  
\begin{eqnarray}
    \label{dnk}
    p_k = \tilde{\pi}_S(y_k)-x_k=-\nabla f_{\N_k}(x_k)-A^T \tilde{\lambda}_k(y_k)
\end{eqnarray}
with $y_k=x_k-\nabla f_{\N_k}(x_k)$,  and $\tilde{\lambda}_k(y_k)$ satisfying 
\eqref{approxlambda0}.\\ If $N_k<N$  go to step S3.\\
If $N_k=N$,  
and  
\begin{eqnarray} \label{dnkN}
    (\nabla f(x_k))^T p_k \leq -c \|p_k\|^2
\end{eqnarray}
go to step S3. \\ Else set 
$x_{k+1}=\tilde{\pi}_S(x_k)$ with $\tilde{\lambda}_k(x_k)$ satisfying $\|r(x_k)\| \leq \eta_k,$ set $k=k+1$ and go to step S1.
\item[\textbf{S3}] \textit{Step size.}  If $N_k=N$, find  the smallest $j \in \mathbb{N}_0$ such that $t_k=\beta^j$ satisfies 
\begin{eqnarray}
    \label{ls}
    f_{\N_k}(x_k+t_k p_k)\leq f_{\N_k}(x_k)  + c_1 t_k (\nabla f_{\N_k}(x_k))^T p_k+\varepsilon_k.
\end{eqnarray}
 Else, if $N_k<N$, starting with $t_k=1$, while $t_k \geq t_{min}$  and $$
    f_{\N_k}(x_k+t_k p_k)> f_{\N_k}(x_k)  + c_1 t_k (\nabla f_{\N_k}(x_k))^T p_k+\varepsilon_k,$$
 reduce $t_k$ by factor $\beta$.

Set $\bar{x}_k=x_k+t_k p_k$. 
\item[\textbf{S4}] \textit{Additional sampling.} \\ If $N_k=N$, set $x_{k+1}=\bar{x}_k$, $k=k+1$ and go to step S1. \\Else choose $\D_k$ via \eqref{Dk} and compute \begin{eqnarray}
    \label{ddk}
    s_k = \tilde{\pi}_S(u_k)-x_k=-\nabla f_{\D_k}(x_k)-A^T \tilde{\lambda}_k(u_k)
\end{eqnarray}
with $u_k=x_k-\nabla f_{\D_k}(x_k)$ and $\tilde{\lambda}_k(u_k)$ satisfying $\|r(u_k)\| \leq \eta_k.  $
\item[\textbf{S5}] \textit{The update.} If \begin{eqnarray}
    \label{ac}
    f_{\D_k}(\bar{x}_k)\leq f_{\D_k}(x_k)  - c  \|   s_k\|^2+C\varepsilon_k,
\end{eqnarray}
set $x_{k+1}=\bar{x}_k$ and $N_{k+1}=N_k$. \\Else set $x_{k+1}=x_k$, choose $N_{k+1} \in \{N_{k}+1,...,N\}$. \\Set $k=k+1$ and go to step S1.  
\end{itemize}

If the condition \eqref{ac} is not satisfied we take $ x_{k+1} = x_k$ and increase the sample size $ N_{k+1}$ for the new iteration. Essentially we reason here that the trial iteration should not be accepted because we need more precision and hence we increase the sample size.  

\noindent {\bf Remark.} An important point here is to notice that we do not need to specify how to increase the sample size if needed, i.e., we are free to choose any $ N_{k+1} > N_k. $ Naturally, one can choose to increase the sample size very slowly to keep the iterations computationally cheap, or to increase the sample size faster to achieve a better approximation of the objective function and the gradient, hoping to end up the process in fewer iterations. Clearly, the sample size scheduling is problem dependent.

\section{Convergence analysis}

\begin{assumption}
\label{A1} Each function $f_i, i=1,...,N$ is continuously differentiable with Lispchitz continuous gradient and bounded from below by a constant $f_{low}$. 
\end{assumption}
This assumption implies that  $f(x) \geq f_{low}$ for all $x \in \mathbb{R}^n$ and it also holds for any approximate function $f_{\N_k}$ and $f_{\D_k}$.  Furthermore all approximate gradients are also Lipschitz continuous and without loss of generality we may assume that $ L > 0 $ is a common Lispchitz constant.

The algorithm we consider generates a set of random iterations $ \{x_k\}.  $ Nevertheless some properties hold for all iterations, independently of the sample we use to generate them. First of all, we can show using the standard arguments and Assumption A\ref{A1} that the step size $ t_k $ generated in  Step 3, \eqref{ls} is bounded from below. 

\begin{lemma}
    \label{Lemma1} Assume that Assumption A\ref{A1} holds and that step size $ t_k $ is computed in Step 3 of IPAS algorithm. Then  
     $ t_k \geq t_{\min} $  provided that  
    $t_{\min}< \min\{ 1,\frac{2 \beta c (1-c_1)}{L}\}. $
\end{lemma}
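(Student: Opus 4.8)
The plan is to establish the lower bound on the step size $t_k$ by the classical Armijo-type argument applied to the approximate objective $f_{\N_k}$. The key observation is that when $N_k = N$ we have $f_{\N_k} = f$, but more importantly we arrive at Step 3 via \eqref{dnkN}, so the direction satisfies the sufficient-descent-type inequality $(\nabla f_{\N_k}(x_k))^T p_k \leq -c\|p_k\|^2$. (For the mini-batch case $N_k < N$ this lemma is not claimed — there the lower bound $t_{\min}$ is imposed by fiat in the while-loop of Step S3 — so I only need to treat the case $N_k = N$.) First I would recall from Assumption A\ref{A1} that the gradient of $f_{\N_k}$ is $L$-Lipschitz, hence the standard descent lemma gives, for any $t \in (0,1]$,
\begin{equation*}
f_{\N_k}(x_k + t p_k) \leq f_{\N_k}(x_k) + t (\nabla f_{\N_k}(x_k))^T p_k + \frac{L t^2}{2}\|p_k\|^2.
\end{equation*}

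Next I would argue that the line-search condition \eqref{ls} is satisfied as soon as the right-hand side of the descent lemma is dominated by $f_{\N_k}(x_k) + c_1 t (\nabla f_{\N_k}(x_k))^T p_k + \eta_k^2$. Since $\eta_k^2 \geq 0$, a sufficient condition is
\begin{equation*}
t (\nabla f_{\N_k}(x_k))^T p_k + \frac{L t^2}{2}\|p_k\|^2 \leq c_1 t (\nabla f_{\N_k}(x_k))^T p_k,
\end{equation*}
i.e., $\frac{L t}{2}\|p_k\|^2 \leq -(1 - c_1)(\nabla f_{\N_k}(x_k))^T p_k$. Using \eqref{dnkN}, namely $-(\nabla f_{\N_k}(x_k))^T p_k \geq c\|p_k\|^2$, it suffices to have $\frac{L t}{2}\|p_k\|^2 \leq (1-c_1) c \|p_k\|^2$, which holds whenever $t \leq \frac{2 c (1 - c_1)}{L}$. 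Therefore any trial step $t$ with $t \leq \min\{1, \frac{2c(1-c_1)}{L}\}$ is accepted. If $\|p_k\| = 0$ the condition \eqref{ls} holds trivially for $t_k = 1$, so that degenerate case causes no problem.

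Finally I would translate this into a bound on the \emph{accepted} step $t_k = \beta^j$ with $j$ minimal. If $t_k = 1$ is accepted we are done since $t_{\min} < 1$. Otherwise the backtracking stopped at some $j \geq 1$, meaning $\beta^{j-1}$ failed the test; by the above, failure forces $\beta^{j-1} > \min\{1, \frac{2c(1-c_1)}{L}\}$, and since $\beta < 1$ we get $t_k = \beta^j = \beta \cdot \beta^{j-1} > \beta \min\{1, \frac{2c(1-c_1)}{L}\} \geq \min\{\beta, \frac{2\beta c(1-c_1)}{L}\}$. Because $t_{\min} < \min\{1, \frac{2\beta c(1-c_1)}{L}\} \leq \min\{\beta, \frac{2\beta c(1-c_1)}{L}\}$ (using $\beta < 1$), this yields $t_k > t_{\min}$, as claimed.

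The routine part is the descent lemma manipulation; the only point requiring care is making sure the constant matches the stated bound $\frac{2\beta c(1-c_1)}{L}$, which is exactly the factor $\beta$ lost in one backtracking step, and checking that the hypothesis \eqref{dnkN} is indeed available at Step 3 in the full-sample case. I do not anticipate a genuine obstacle here — this is a standard Armijo argument — but I would be careful to phrase the statement so that it only concerns the branch $N_k = N$, since in the mini-batch branch $t_{\min}$ is a hard floor rather than something derived.
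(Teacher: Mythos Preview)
Your proposal is correct and follows exactly the standard Armijo backtracking argument that the paper invokes (by citation to \cite{lsnmbb}) for the full-sample case, together with the observation that $t_k \geq t_{\min}$ holds by construction of the while-loop when $N_k < N$. One small wording point: the lemma \emph{is} claimed for the mini-batch branch as well---the paper's own proof treats both cases explicitly---so rather than saying the lemma ``is not claimed'' there, simply note (as you effectively do) that it is immediate from Step S3; also, your final chain $\min\{1,\tfrac{2\beta c(1-c_1)}{L}\}\leq\min\{\beta,\tfrac{2\beta c(1-c_1)}{L}\}$ is not true in general, but it does hold (with equality) in the only case where backtracking actually occurs, namely when $\tfrac{2c(1-c_1)}{L}<1$, so the conclusion stands.
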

{\em Proof.}  If the full sample is reached, the line search is performed only if \eqref{dnkN} holds and it can be proved (see \cite{lsnmbb} for example) that $t_k \geq \frac{2 \beta c (1-c_1)}{L}$. On the other hand, if $N_k<N$, the line search yields  $t_k\geq t_{min}$ by the construction of Step 3 of IPAS algorithm. 
$\Box$

Furthermore, we can prove that the feasibility of iterations $ \bar{x}_k $ is eventually increasing although the projections are inexact. The following statement holds. 

\begin{lemma}
    \label{lemafes}
  Assume that Assumption A\ref{A1} holds. Then 
  \begin{equation}
      \label{dopustivost}
  e(\bar{x}_k) \leq (1-t_{\min}) e(x_k) +  \eta_k.
   \end{equation}
\end{lemma}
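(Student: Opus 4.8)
The plan is to unwind the definitions of $e$ and $\bar x_k$ and reduce everything to Lemma~\ref{projekcija}. Recall $\bar x_k = x_k + t_k p_k$ where, by \eqref{dnk}, $p_k = \tilde\pi_S(y_k) - x_k$ with $y_k = x_k - \nabla f_{\N_k}(x_k)$ and the residual bound \eqref{approxlambda0}, $\|r(y_k)\| \le \eta_k$. So I would first write
\begin{equation*}
\bar x_k = (1-t_k) x_k + t_k \tilde\pi_S(y_k),
\end{equation*}
which exhibits $\bar x_k$ as an affine combination. Then $A\bar x_k - b = (1-t_k)(Ax_k - b) + t_k(A\tilde\pi_S(y_k) - b)$, so by the triangle inequality
\begin{equation*}
e(\bar x_k) = \|A\bar x_k - b\| \le (1-t_k)\,e(x_k) + t_k\, e(\tilde\pi_S(y_k)),
\end{equation*}
using that $t_k \in (0,1]$ so $1-t_k \ge 0$ (here I should note $t_k \le 1$ holds in both branches of Step S3).

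Next I would invoke Lemma~\ref{projekcija} with $z = y_k$ and $M = \eta_k$: since $\|r(y_k)\| \le \eta_k$, we get $e(\tilde\pi_S(y_k)) \le \eta_k$. Substituting,
\begin{equation*}
e(\bar x_k) \le (1-t_k)\,e(x_k) + t_k \eta_k.
\end{equation*}
Finally, to pass from $t_k$ to $t_{\min}$, I would use Lemma~\ref{Lemma1}, which gives $t_k \ge t_{\min}$ (under the stated smallness condition on $t_{\min}$, which we may assume holds as part of the algorithm's setup). Since $t_{\min} \le t_k \le 1$, we have $1 - t_k \le 1 - t_{\min}$, and also $t_k \eta_k \le \eta_k$ because $\eta_k > 0$ and $t_k \le 1$. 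Combining these two monotonicity observations yields
\begin{equation*}
e(\bar x_k) \le (1-t_{\min})\,e(x_k) + \eta_k,
\end{equation*}
which is exactly \eqref{dopustivost}.

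There is really no serious obstacle here; the only points requiring a moment of care are (i) confirming $t_k \le 1$ in the mini-batch branch of Step S3 (it starts at $t_k = 1$ and only decreases) and in the full-sample branch ($t_k = \beta^j$ with $j \in \mathbb{N}_0$, so $t_k \le 1$), which justifies $1-t_k \ge 0$ and $t_k\eta_k \le \eta_k$; and (ii) correctly matching the hypothesis of Lemma~\ref{projekcija} to the residual condition \eqref{approxlambda0} so that $M = \eta_k$ is admissible. The Lipschitz assumption (Assumption~A\ref{A1}) enters only indirectly, through Lemma~\ref{Lemma1}, to guarantee $t_k \ge t_{\min}$; the feasibility estimate itself is purely a consequence of the affine structure of $\bar x_k$ and the controlled projection residual.
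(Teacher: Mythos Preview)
Your proof is correct and follows essentially the same approach as the paper's own proof: write $\bar x_k$ as the convex combination $(1-t_k)x_k + t_k\tilde\pi_S(y_k)$, apply the triangle inequality and Lemma~\ref{projekcija}, then use $t_{\min}\le t_k\le 1$ to reach \eqref{dopustivost}. If anything, your version is slightly more explicit about why $t_k\le 1$ is needed (to ensure $1-t_k\ge 0$ and $t_k\eta_k\le\eta_k$), which the paper leaves implicit.
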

{\em Proof.} Given that 
\begin{eqnarray*}  \bar{x}_k & = & x_k + t_k p_k = x_k + t_k (\tilde{\pi}_S(y_k)-x_k) \\
    & = & (1-t_k)x_k + t_k \tilde{\pi}_S(y_k), 
\end{eqnarray*}
and 
\begin{eqnarray}
    e(\bar{x}_{k}) &=& \|A((1-t_k)x_k + t_k\tilde{\pi}_S(y_k)) - b \|\\
    & \leq & (1-t_k) \|Ax_k - b\|+ t_k\|A\tilde{\pi}_S(y_k) - b\| \\
    &= & (1-t_k) e(x_k) + t_k e(\tilde{\pi}_S(y_k)). 
\end{eqnarray}
Since $ t_k \geq t_{\min} $  and the residual of $ \tilde{\pi}_S(y_k) $ satisfies \eqref{approxlambda0}, by Lemma \ref{projekcija} we have $e(\tilde{\pi}_S(y_k)) \leq \eta_k. $ Therefore, the above inequalities imply
$$ e(\bar{x}_k) \leq (1-t_{\min})e(x_k) + \eta_k.  $$
$\Box$

Let us denote  by $\D_k^+$ the subset of all possible outcomes of $\D_k$ at iteration $k$ for which the condition \eqref{ac} is satisfied, i.e.,
\begin{equation} \label{dkp}
    \D_k^+= \{\D_k \subset \N \; \vert \; f_{\D_k}(\bar{x}_k)\leq f_{\D_k}(x_k)  - c  \|   s_k\|^2+C\varepsilon_k \}.
\end{equation}
We denote the complementary subset of outcomes at iteration $k$ by
\begin{equation} \label{dkm}
    \D_k^{-}= \{\D_k \subset \N\; \vert \; f_{\D_k}(\bar{x}_k)> f_{\D_k}(x_k)  - c  \|   s_k\|^2+C\varepsilon_k \}.
\end{equation}
Although the problem we consider is constrained and the algorithm is quite different from the one in \cite{asntr, lsnmbb} with sampling that is not uniform, the following lemma, similar to the \cite[Lemma 1]{lsnmbb} holds. Essentially,  it says that either $ N_k =N $ for $ k $ large enough or the condition \eqref{ac} is satisfied infinitely many times. We state the proof for completeness.
\begin{lemma}\label{Lemma-2cases}
    Suppose that Assumption A\ref{A1} holds. If  $N_k<N$ for all $k \in \mathbb{N}$, then a.s. there exists   $k_1 \in \mathbb{N}$ such that $\D_k^{-}=\emptyset$ for all $k \geq k_1$.
\end{lemma}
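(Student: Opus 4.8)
The plan is to argue by contradiction, combining a deterministic monotonicity property of $\{N_k\}$ with a conditional Borel--Cantelli argument that bounds how often the randomly drawn additional sample $\D_k$ can fall into $\D_k^{-}$.

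First I would observe that, on the event $\{N_k<N \text{ for all } k\}$, the integer sequence $\{N_k\}$ is nondecreasing: by Step S5, $N_{k+1}=N_k$ whenever \eqref{ac} holds and $N_{k+1}\geq N_k+1$ otherwise. Being nondecreasing and bounded above by $N-1$, it is eventually constant, so there is a (random) index $k_0$ with $N_k=\bar N<N$ for all $k\geq k_0$. Hence for every $k\geq k_0$ the realized sample $\D_k$ must satisfy \eqref{ac} --- otherwise S5 would increase the sample size --- i.e. the drawn $\D_k$ lies in $\D_k^{+}$ for all $k\geq k_0$. Equivalently, on $\{N_k<N\ \forall k\}$ the event ``the drawn sample lies in $\D_k^{-}$'' occurs only finitely often.

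For the probabilistic part, let $\F_k$ be the $\sigma$-algebra generated by everything determined before $\D_k$ is drawn; then $x_k$, $p_k$, $t_k$ and $\bar x_k$ are $\F_k$-measurable, and so is the set $\D_k^{-}$ of \eqref{dkm}: although $s_k$ in \eqref{ddk} depends on the sample, the inequality defining $\D_k^{-}$ is quantified over all candidate samples, so membership is a function of $x_k$, $\bar x_k$ and $\eta_k$ only. Since $D_k\leq N-1$ for all $k$, only finitely many sample sizes occur and, for each, only finitely many subsets are drawn with positive probability under \eqref{Dk}; let $\delta>0$ be the smallest such positive probability (indices with $w_s=0$ are never drawn and may be discarded). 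Then on $\{\D_k^{-}\neq\emptyset\}$ there is at least one achievable subset in $\D_k^{-}$, so $P(\D_k\in\D_k^{-}\mid\F_k)\geq\delta\,\mathbf 1_{\{\D_k^{-}\neq\emptyset\}}$.

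Now suppose the statement fails, i.e. $P\big(\{N_k<N\ \forall k\}\cap\{\D_k^{-}\neq\emptyset\ \text{infinitely often}\}\big)>0$. On $\{\D_k^{-}\neq\emptyset\ \text{i.o.}\}$ we have $\sum_k P(\D_k\in\D_k^{-}\mid\F_k)\geq\delta\sum_k\mathbf 1_{\{\D_k^{-}\neq\emptyset\}}=\infty$, so L\'evy's conditional Borel--Cantelli lemma gives that, almost surely on that event, the drawn sample lies in $\D_k^{-}$ infinitely often. This contradicts the previous paragraph, where on $\{N_k<N\ \forall k\}$ it happens only finitely often; hence the intersection is a null set, which is exactly the assertion. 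I expect the main obstacle to be precisely this last passage --- from ``the drawn sample is eventually always in $\D_k^{+}$'' to ``$\D_k^{-}$ is eventually empty'': a persistently non-empty but rarely hit $\D_k^{-}$ can only be excluded by using the uniform lower bound $\delta$ on the sampling probabilities together with a conditional (rather than independent) Borel--Cantelli lemma, since $\D_k$ is not independent of the history.
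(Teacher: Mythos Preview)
Your argument is correct and follows the same line as the paper's own proof: use monotonicity of $\{N_k\}$ to stabilize at some $\bar N<N$, obtain a uniform positive lower bound on the probability of drawing a sample in $\D_k^{-}$, and derive a contradiction with the eventual constancy of $N_k$. The only notable difference is that the paper writes the last step as a product bound $\mathbb{P}(\D_k\in\D_k^{+},\,k\in K)\leq\prod_{k\in K}(1-q)=0$, implicitly treating the successive draws as independent, whereas your invocation of L\'evy's conditional Borel--Cantelli lemma handles the dependence on the history more carefully; your remark that indices with $w_s=0$ should be discarded also refines the paper's explicit choice $q=(\min_s w_s)^{N-1}$, which would vanish if some weight is zero.
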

\begin{proof}  Assume that $N_k<N$ for all $k \in \mathbb{N}$. Since the sample size sequence $\{N_k\}$ in  IPAS Algorithm  is non-decreasing there exists some $\overline{N}<N$   such that $N_k=\overline{N}$ for all $k $ large enough. Now, let us assume that there is no $k_1 \in \mathbb{N}$ such that $\D_k^{-}=\emptyset$ for all $k \geq k_1$. Then  there exists an infinite sub-sequence of iterations $K \subseteq \mathbb{N}$ such that  $\D_k^{-}\neq \emptyset$ for all $k \in K$. Since $\D_k$ is chosen with finitely many possible outcomes with the same distribution  for each $k$, there exists  $q>0$ such that  $\mathbb{P}(\D_k \in \D_k^-)\geq q$ for all $k \in K. $
In fact, given \eqref{Dk} and the fact that $D_k \leq N-1$ for each $k$, we can conclude that $q=(\min_{s \in \{1,2,...,N\}}\{w_s\})^{N-1}$. So, we have
$$\mathbb{P}(\D_k \in \D_k^+, k \in K)\leq \Pi_{k \in K} (1-q)=0.$$
Therefore we will almost surely encounter an iteration at which the sample size  will be increased due to violation of the condition \eqref{ac}. 
This is a contradiction with the condition  $N_k = \overline{N}$ for all $ k $ large enough and we conclude that the statement holds.
\end{proof}

As already stated, depending on the problem, IPAS algorithm yields two possibilities - either we generate an infinite sequence $ \{x_k\} $ such that $ N_k < N $ for all $ k, $ or we end up with $ N_k = N $ for $ k  $ large enough.  So, the convergence analysis will cover these two cases separately. Let us first consider the mini-batch case, i.e. $ N_k < N $ for all $ k \in \mathbb{N}. $ 

The following lemma quantifies the progress in term of the (true) objective function and exact projections in the mini-batch case.

\begin{lemma}
    \label{Lemma-NEWaccept}
    Suppose that Assumption A\ref{A1} holds.  If  $N_k<N$ for all $k \in \mathbb{N}$, then a.s. 
    $$f(x_{k+1}) \leq f(x_{k})- \frac{c}{2} \| d(x_k) \|^2+C_L \max \{\varepsilon_k, \eta^2_k\}   $$ 
     holds for all  $k\geq k_1 $ and some constant $C_L$, where $k_1$ is as in Lemma \ref{Lemma-2cases} and $d(x_k)$ is given by (\ref{pgd}).
\end{lemma}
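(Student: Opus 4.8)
The plan is to invoke Lemma~\ref{Lemma-2cases} to pass to the regime in which the additional‑sampling acceptance test \eqref{ac} is satisfied for \emph{every} possible outcome of $\D_k$, and then to average \eqref{ac} over $\D_k$, using the unbiasedness of $f_{\D_k}$ (cf.\ \eqref{fdk}--\eqref{Dk}) together with the stability of the inexact projection. Concretely: since $N_k<N$ for all $k$, Lemma~\ref{Lemma-2cases} yields, a.s., a $k_1$ with $\D_k^-=\emptyset$ for all $k\ge k_1$; for such $k$, Step~S5 sets $x_{k+1}=\bar x_k$, and, what is essential, the inequality $f_{\D_k}(\bar x_k)\le f_{\D_k}(x_k)-c\|s_k\|^2+C\eta_k^2$ then holds for \emph{all} realizations of $\D_k$, where $s_k=-\nabla f_{\D_k}(x_k)-A^T\tilde\lambda_k(u_k)$, $u_k=x_k-\nabla f_{\D_k}(x_k)$, $\|r(u_k)\|\le\eta_k$ as in Step~S4. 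Conditioning on the $\sigma$-algebra generated through the selection of $\N_k$ (so that $x_k,\bar x_k,\eta_k$ are fixed and $\D_k$ is independent of them), this is a surely-valid bound, and taking the conditional expectation over $\D_k$ together with $\E[f_{\D_k}(\bar x_k)]=f(\bar x_k)$, $\E[f_{\D_k}(x_k)]=f(x_k)$, and the Jensen inequality $\E[\|s_k\|^2]\ge\|\E[s_k]\|^2$ gives $f(\bar x_k)\le f(x_k)-c\|\E[s_k]\|^2+C\eta_k^2$.

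It then remains to relate $\E[s_k]$ to the \emph{exact} projected gradient direction $d(x_k)=\pi_S(x_k-\nabla f(x_k))-x_k$. Writing $\tilde\lambda_k(u_k)=(AA^T)^{-1}(Au_k-b+r(u_k))$ from \eqref{ptildarez}, using $\nabla f=\sum_i w_i\nabla f_i$, and comparing with the closed form \eqref{tacnaprojekcija}, one finds
\[
\E[s_k]-d(x_k)=-A^T(AA^T)^{-1}\,\E[r(u_k)],
\]
so that, since $\|r(u_k)\|\le\eta_k$ on every branch, $\|\E[s_k]-d(x_k)\|\le\kappa\eta_k$ with $\kappa:=\|A^T(AA^T)^{-1}\|$ a constant depending only on $A$; this is the inexact-projection analogue of Lemma~\ref{lemaprojekcije}. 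Hence $\|d(x_k)\|^2\le 2\|\E[s_k]\|^2+2\kappa^2\eta_k^2$, i.e.\ $\|\E[s_k]\|^2\ge\tfrac12\|d(x_k)\|^2-\kappa^2\eta_k^2$, and substituting into the bound of the previous paragraph, together with $x_{k+1}=\bar x_k$, yields $f(x_{k+1})\le f(x_k)-\tfrac c2\|d(x_k)\|^2+(C+c\kappa^2)\eta_k^2$ for all $k\ge k_1$, a.s., which is the claim with $C_L:=C+c\kappa^2$.

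The main obstacle, and really the only nonroutine ingredient, is the first step: recognizing (as in the unconstrained analysis of \cite[Lemma~1]{lsnmbb}) that $\D_k^-=\emptyset$ upgrades the \emph{random} acceptance test into an inequality valid for all outcomes of $\D_k$, which may then be integrated against the law of $\D_k$ to bring in the true $f$ and $\nabla f$. After that, everything is Jensen's inequality plus the elementary perturbation identity \eqref{tacnaprojekcija}--\eqref{ptildarez} for inexact projections; in particular, the nonmonotone line search \eqref{ls} is not used at all in this estimate --- the entire decrease comes from \eqref{ac}.
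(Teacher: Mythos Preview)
Your argument is correct and follows essentially the same route as the paper's proof: both exploit that $\D_k^{-}=\emptyset$ turns \eqref{ac} into a bound valid for \emph{every} possible additional sample, then average to replace $f_{\D_k}$ by $f$, and finally compare the averaged direction to the exact $d(x_k)$ via the residual bound $\|r(u_k)\|\le\eta_k$, arriving at the same constant $C_L=C+c\,\|A^T(AA^T)^{-1}\|^2$. The only cosmetic difference is that the paper specializes to the constant samples $\D_k=\{i,\dots,i\}$ and takes the weighted sum $\sum_i w_i(\cdot)$, while you integrate against the sampling law of $\D_k$ and use Jensen; the convexity step is the same in both cases.
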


\begin{proof}  First, we prove that
 $$f(\bar{x}_k) \leq f(x_{k})- \frac{c}{2} \| d(x_k) \|^2+C_L\max \{\varepsilon_k, \eta^2_k \}$$ holds a.s. for all $k\geq k_1 $, 
      where $k_1$ is as in Lemma \ref{Lemma-2cases} and some constant $C_L$.

Notice that Lemma \ref{Lemma-2cases} implies that  a.s. \eqref{ac} holds for all possible realizations of $\D_k$ and for all $k\geq k_1$. Thus, we conclude that  a.s.  for every $i=1,2,...,N$ and every $k\geq k_1$ we have 
\begin{equation} 
\label{wccp1} 
f_{i}(\bar{x}_k)\leq f_{i}(x_k)  - c  \|   z_{k}^i\|^2+C\varepsilon_k, \end{equation}
where $z_k^i$ denotes the direction obtained for $\D_k=\{i\}$ in Step 4 i.e., 
\begin{eqnarray}
    \label{ddki}
    z_k^{i} = u^i_k-A^T \tilde{\lambda}_k(u^i_k)-x_k
\end{eqnarray}
where $u^i_k=x_k-\nabla f_{i}(x_k)$ and $\tilde{\lambda}_k(u^i_k)$ satisfies \begin{equation}
    \label{approxlambda}
    \|AA^T \tilde{\lambda}_k(u^i_k)- Au^i_k+b\|\leq \eta_k.
\end{equation}  
Indeed, if there exists $i \in \N$ that violates \eqref{ddki}, then there would exists at least one possible realization of $\D_k$ (namely, $\D_k=\{i,i,...,i\}$) that violates \eqref{wccp1} and thus we would have $\D_k^{-}\neq \emptyset$.
Let us denote the residual by $r_k^i$, i.e., we have 
\begin{equation}
     \label{res}
    r_k^i= AA^T \tilde{\lambda}_k(u^i_k)- Au^i_k+b, \quad \|r_k^i\|\leq \eta_k,
 \end{equation}
 and 
 \begin{equation}
     \label{res2}
    \tilde{\lambda}_k(u^i_k)=(AA^T)^{-1}r_k^i+  (AA^T)^{-1}(Au^i_k-b). 
 \end{equation}
 Moreover, 
 \begin{equation}
     \label{res3}
    z_k^{i} = u^i_k-A^T (AA^T)^{-1}r_k^i-A^T(AA^T)^{-1}(Au^i_k-b)-x_k. 
 \end{equation}
Next, multiplying both sides of \eqref{wccp1} with $w_i$ satisfying \eqref{w} and summing over all $i \in \N$ we obtain that a.s. the following holds for all $k\geq k_1$
\begin{equation}
    \label{p1}
    f(\bar{x}_k)\leq f(x_k)  - c \sum_{i=1}^{N} w_i \|   z_k^{i}\|^2+C\varepsilon_k.
\end{equation}
Further, writing $x_k=\sum_{i=1}^{N} w_i x_k$ and using \eqref{proconv} in Lemma \ref{lemaprojekcije} with $y^i=x_k-\nabla f_i(x_k)$
 we obtain that the exact projection direction related to the original objective function can be represented as follows
 \begin{eqnarray}
     \label{pd}
     d(x_k)&=& \pi_S (x_k-\nabla f(x_k))-x_k\\\nonumber 
     &=& \pi_S (\sum_{i=1}^{N} w_i(x_k-\nabla f_i(x_k)))-\sum_{i=1}^{N} w_i x_k\\\nonumber 
     &=&\sum_{i=1}^{N} w_i (\pi_S (x_k-\nabla f_i(x_k))-x_k)\\\nonumber 
     &=&\sum_{i=1}^{N} w_i (z_k^i+\pi_S (x_k-\nabla f_i(x_k))-x_k-z_k^i)\\\nonumber 
     &=&\sum_{i=1}^{N} w_i (z_k^i+\pi_S (u_k^i)-x_k-z_k^i).
 \end{eqnarray}
Further, using \eqref{w} and the convexity of $\|\cdot \|^2$, we obtain 
 \begin{eqnarray}
     \label{pd2}
     \|d(x_k)\|^2 &\leq &  \sum_{i=1}^{N} w_i \|z_k^i+\pi_S (u_k^i)-x_k-z_k^i\|^2\\\nonumber 
     &\leq&  2 \sum_{i=1}^{N} w_i \|z_k^i\|^2+2\sum_{i=1}^{N} w_i\|\pi_S (u_k^i)-x_k-z_k^i\|^2.
 \end{eqnarray}
 Now, let us estimate $\|\pi_S (u_k^i)-x_k-z_k^i\|^2$. According to \eqref{tacnaprojekcija} and \eqref{res3} we obtain
 \begin{eqnarray}
     \label{pd3}
     \pi_S (u_k^i)-x_k-z_k^i &= &  u_k^i-A^T(AA^T)^{-1} (Au_k^i-b)-x_k \\\nonumber
     &-& (u^i_k-A^T (AA^T)^{-1}r_k^i-A^T(AA^T)^{-1}(Au^i_k-b)-x_k)\\\nonumber
     &=&A^T (AA^T)^{-1}r_k^i.
 \end{eqnarray}
 Thus, for $C_{A}= \|A^T (AA^T)^{-1}\|$ we get 
 \begin{equation}
     \label{pd4}
     \|\pi_S (u_k^i)-x_k-z_k^i\|^2\leq \|A^T (AA^T)^{-1}\|^2 \|r_k^i \|^2\leq C_{A}^2 \eta_k^2.
 \end{equation}
 Therefore, from \eqref{pd2} we obtain 
 \begin{eqnarray}
     \label{pd5}
    -\sum_{i=1}^{N} w_i \|z_k^i\|^2   &\leq &  -\frac{1}{2} \|d(x_k)\|^2+\sum_{i=1}^{N} w_i\|\pi_S (u_k^i)-x_k-z_k^i\|^2\\\nonumber
    &\leq& -\frac{1}{2} \|d(x_k)\|^2+C_{A}^2 \eta_k^2.
 \end{eqnarray}
 Now, \eqref{p1} implies 
 \begin{equation}
    \label{pd6}
    f(\bar{x}_k)\leq f(x_k)  - \frac{c}{2} \|d(x_k)\|^2  +c C_{A}^2 \eta^2_k+C\varepsilon_k=:f(x_k)  - \frac{c}{2} \|d(x_k)\|^2  +C_L \max \{\varepsilon_k, \eta^2_k\}.
\end{equation}
Next, we show that in the mini-batch scenario we accept  the trial point $\bar{x}_k$ for all $k\geq k_1$. 
Since we know that 
  a.s. $\D_k^{-}=\emptyset$ for all $k\geq k_1$, i.e., \eqref{ac} is satisfied, therefore, a.s., for all $k \geq k_1 $  the trial point is accepted, i.e., $x_{k+1}=\bar{x}_k$, so the statement holds due to (\ref{pd6}).
\end{proof}

Now, let us denote by $\Omega$ all possible sample paths of the IPAS algorithm. Further, let us denote by $\A \subseteq \Omega$ all possible sample paths that yield mini-batch scenario considered in the previous proposition and by $\bar{\A} \subseteq \Omega$ all possible sample paths that reach the full sample size eventually, i.e., the complement of $\A$. We use the following notation for the corresponding conditional expectations 
$$\mathbb{E}_{\A} ( \cdot):=\mathbb{E}( \cdot \vert \A), \quad \mathbb{E}_{\bar{\A}} ( \cdot):=\mathbb{E}( \cdot \vert \bar{\A}).$$

In order to prove  the convergence results, we impose the following assumption similar to one in \cite{lsnmbb}.

\begin{assumption}
    \label{assNEW}
There exists a constant $C_{\A}$ such that $\mathbb{E}_{\A} ( \vert f(x_{k_1})\vert ) \leq C_{\A}$, where $k_1$ is as in Lemma \ref{Lemma-2cases}.
\end{assumption}

 The following result shows that $d(x_k)$ defined in (\ref{pgd}) converges to zero a.s. in the mini-batch scenario. 

\begin{corollary} \label{das-mini}
    Suppose that the assumptions of Lemma \ref{Lemma-NEWaccept} hold together with Assumption A\ref{assNEW}. Then $$P(\lim_{k \to \infty} d(x_k) =0|\A)=1$$
    and every accumulation point of the sequence $\{x_k\}$ is a stationary point of problem \eqref{problem} a. s.
\end{corollary}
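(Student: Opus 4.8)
The plan is to convert the per-iteration decrease of Lemma \ref{Lemma-NEWaccept} into a summability statement for $\|d(x_k)\|^2$, then use the feasibility recursion to show that the iterates become asymptotically feasible, and finally invoke the stationarity characterization of Theorem \ref{bmr} at accumulation points. Throughout I work under the conditional measure $P(\cdot\mid\A)$. By Lemma \ref{Lemma-NEWaccept}, a.s.\ on $\A$ we have $f(x_{k+1}) \leq f(x_k) - \tfrac{c}{2}\|d(x_k)\|^2 + C_L\eta_k^2$ for every $k \geq k_1$. Telescoping from $k_1$ to $K$ and using $f(x_{K+1}) \geq f_{low}$ from Assumption A\ref{A1} together with $\sum_k \eta_k^2 \leq \bar{\eta}$ from \eqref{etak} gives, a.s.\ on $\A$,
$$\frac{c}{2}\sum_{k=k_1}^{\infty}\|d(x_k)\|^2 \;\leq\; f(x_{k_1}) - f_{low} + C_L\bar{\eta} \;\leq\; |f(x_{k_1})| + |f_{low}| + C_L\bar{\eta}.$$
The right-hand side is a.s.\ finite but random, so I apply $\mathbb{E}_{\A}$ and invoke Assumption A\ref{assNEW}, obtaining $\mathbb{E}_{\A}\!\big(\sum_{k\geq k_1}\|d(x_k)\|^2\big) \leq \tfrac{2}{c}\big(C_{\A} + |f_{low}| + C_L\bar{\eta}\big) < \infty$. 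Hence this nonnegative series is a.s.\ finite on $\A$, its general term converges to zero, and $P(\lim_{k\to\infty} d(x_k) = 0 \mid \A) = 1$.

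For the accumulation points I would first establish asymptotic feasibility. In the mini-batch regime $x_{k+1} = \bar{x}_k$ for all $k \geq k_1$, so Lemma \ref{lemafes} gives $e(x_{k+1}) \leq (1 - t_{\min})e(x_k) + \eta_k$ with $1 - t_{\min} \in [0,1)$; since $\sum_k \eta_k^2 < \infty$ forces $\eta_k \to 0$, Lemma \ref{konvergencija} (applied to the tail starting at $k_1$) yields $e(x_k) = \|Ax_k - b\| \to 0$. Let $x^\ast$ be any accumulation point of $\{x_k\}$, say $x_k \to x^\ast$ along a subsequence $k \in K' \subseteq \mathbb{N}$. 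Continuity of $e$ gives $e(x^\ast) = 0$, i.e.\ $x^\ast \in S$. The map $x \mapsto d(x) = \pi_S(x - \nabla f(x)) - x$ is continuous, since $\pi_S$ is the affine operator \eqref{tacnaprojekcija} and $\nabla f$ is continuous by Assumption A\ref{A1}; hence, on the full-measure event from the previous paragraph, $d(x^\ast) = \lim_{k \in K'} d(x_k) = 0$. By Theorem \ref{bmr}(b), $x^\ast$ is a stationary point of \eqref{problem}, which proves the claim a.s.

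The main obstacle I anticipate is the bookkeeping around the random index $k_1$: the telescoped estimate carries the random quantity $f(x_{k_1})$ on its right-hand side, and it is precisely Assumption A\ref{assNEW} that permits passing to conditional expectations and concluding that $\sum_k \|d(x_k)\|^2$ is a.s.\ finite rather than merely finite in mean along each path. The remaining care is mostly notational — phrasing every limit with respect to $P(\cdot\mid\A)$ and observing that the feasibility $x^\ast \in S$ obtained from Lemma \ref{konvergencija} is exactly what makes Theorem \ref{bmr} applicable at the accumulation point.
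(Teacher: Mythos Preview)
Your proof is correct and shares the paper's overall architecture: telescope the decrease inequality from Lemma~\ref{Lemma-NEWaccept}, combine the lower bound $f_{low}$ with \eqref{etak} to obtain summability of $\|d(x_k)\|^2$, then establish asymptotic feasibility via Lemmas~\ref{lemafes} and~\ref{konvergencija}, and finish with continuity of $d$ and Theorem~\ref{bmr}. The one genuine difference is in how you pass from the recursion to a.s.\ convergence of $d(x_k)$. The paper first applies $\mathbb{E}_{\A}$ to the recursion, telescopes in expectation to get $\sum_i \mathbb{E}_{\A}(\|d(x_{k_1+i})\|^2)<\infty$, and then invokes Markov's inequality together with the Borel--Cantelli lemma. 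You instead telescope \emph{pathwise} first, obtaining $\tfrac{c}{2}\sum_{k\geq k_1}\|d(x_k)\|^2 \leq f(x_{k_1})-f_{low}+C_L\bar\eta$ a.s.\ on $\A$; this already shows the series is a.s.\ finite (since $f(x_{k_1})$ is a.s.\ a real number), so the general term tends to zero without any appeal to Borel--Cantelli. In fact, your subsequent step through $\mathbb{E}_{\A}$ and Assumption~A\ref{assNEW} is redundant in your own argument---the pathwise bound alone suffices---whereas in the paper's expectation-first route A\ref{assNEW} is genuinely needed to control $\mathbb{E}_{\A}(f(x_{k_1}))$. Your route is a bit more elementary; the paper's buys the additional information that the sum of $\|d(x_k)\|^2$ has finite conditional mean.
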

\begin{proof}
    Lemma \ref{Lemma-NEWaccept} implies that  a.s. 
    $$f(x_{k+1}) \leq f(x_{k})- \frac{c}{2} \| d(x_k)\|^2+C_L \max \{\varepsilon_k, \eta^2_k\}$$  holds for all    $k\geq k_1 $. Applying the conditional expectation $\mathbb{E}_{\A}$, by the induction argument we obtain that for each $j$ there holds
$$\mathbb{E}_{\A}(f(x_{k_1+j})) \leq \mathbb{E}_{\A}(f(x_{k_1}))- \frac{c}{2} \sum_{i=0}^{j-1}\mathbb{E}_{\A}(\| d(x_{k_1+i})\|^2)+C_L\sum_{i=0}^{j-1}\max \{\varepsilon_{k_1+i}, \eta^2_{k_1+i} \}.$$
Now, by using Assumptions A\ref{A1}, A\ref{assNEW}, and \eqref{etak}, letting $j $ tend to infinity we get  
$$f_{low} \leq C_{\A} - \frac{c}{2} \sum_{i=0}^{\infty}\mathbb{E}_{\A}(\| d(x_{k_1+i})\|^2)+C_L \max\{ \bar{\varepsilon},\bar{\eta} \} $$
and we conclude that 
$$\sum_{i=0}^{\infty}\mathbb{E}_{\A}(\| d(x_{k_1+i})\|^2) <\infty.$$
Now, by the extended version of Markov's inequality we have that for any $\epsilon>0$
\begin{equation*}
    \mathbb{P}(\|d(x_{k_1+i})\|\geq \epsilon\vert \A)\leq \frac{\mathbb{E}_{\A}(\|d(x_{k_1+i})\|^2)}{\epsilon^2}
\end{equation*}
which implies
\begin{equation*}
    \sum_{i=0}^{\infty} \mathbb{P}(\|d(x_{k_1+i})\|\geq \epsilon\vert \A)<\infty
\end{equation*}
and Borel-Cantelli Lemma \cite{klenke} implies that $P(\lim_{i\to\infty} \|d(x_{k_1+i})\|=0\vert \A)=1$, i.e., 
\begin{equation} \label{d0A} P(\lim_{i\to\infty} d(x_{k_1+i})=0\vert \A)=1.
\end{equation}

    Now, recall that  $e_k$ is the measure of infeasibility of $x_k$ defined in \eqref{emera}, i.e., $e_k:=  \|A x_k-b\|$.  Lemma \ref{lemafes} implies that $$e_{k+1} \leq (1-t_{\min}) e_k + \eta_k$$ for all $k \geq k_1$. Therefore, due to Lemma \ref{konvergencija} we obtain $\lim_{k \to \infty} e_k=0$. 
    Let us denote by $\tilde{x}=\lim_{k \in K} x_k$ an arbitrary accumulation point of IPAS algorithm in the mini-batch scenario. Then, we have that $\|A \tilde{x}-b\|=\lim_{k \in K} e_k=0$ and we conclude that $\tilde{x}$ is feasible.  Moreover, due to \eqref{d0A} and continuity of $d$, a.s. $$d(\tilde{x})=\lim_{k \in K} d(x_k) =0$$   and we conclude that $\tilde{x}$ is  a.s. a stationary point of \eqref{problem} according to Theorem \ref{bmr}. $ \Box $

\end{proof}

Now, we analyze the case where the full sample is reached for some $  k_2 \in \mathbb{N}_0$ and therefore $ N_k = N$ for $ k\geq k_2. $ We will distinguish between two types of iterations for $ k \geq k_2, $ successful and unsuccessful, represented by sets of indices $K_{su} $ and $ K_{un}. $ An iteration $ x_{k}$,  $ k \geq k_2$ is unsuccessful if the condition \eqref{dnkN} does not hold, i.e., if the direction $ p_k $ is not sufficiently decreasing  and hence $ x_{k+1} = \tilde{\pi}_S(x_k) $ and $ r(x_k) \leq \eta_k. $ Otherwise, $ x_{k}$ is successful and the new iteration is determined by the line search rule \eqref{ls} and $ x_{k+1} = \bar{x}_k. $ 

The following Lemma states that the sequence 
$\{ x_{k} \}_{k\geq k_2}$ converges to a feasible point and hence each accumulation point is feasible. We will rely on Lemmas \ref{projekcija} and \ref{konvergencija} from Section 2.

\begin{lemma}
    \label{lema2}
    Assume that Assumption A\ref{A1} holds and let $ \{x_k\}$ be an iterative sequence generated by IPAS algorithm such that $ N_k = N$ for $ k \geq k_2 $. Then $ \lim_{k \to \infty} e(x_k) = 0 $ and each accumulation point is feasible. 
\end{lemma}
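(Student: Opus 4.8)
The plan is to establish, for every index $k \ge k_2$, the single recursive inequality $e(x_{k+1}) \le (1-t_{\min})\, e(x_k) + \eta_k$, and then to feed it into Lemma~\ref{konvergencija}. The only real work is to check that this recursion holds in \emph{both} kinds of iterations that can occur once the full sample is reached, namely the unsuccessful ones (where \eqref{dnkN} fails) and the successful ones.

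First I would dispose of the unsuccessful iterations. If the step at index $k \ge k_2$ is unsuccessful, then by Step~S2 we have $x_{k+1} = \tilde{\pi}_S(x_k)$ with a multiplier $\tilde{\lambda}_k(x_k)$ whose residual satisfies $\|r(x_k)\| \le \eta_k$. Applying Lemma~\ref{projekcija} with $z = x_k$ and $M = \eta_k$ gives $e(x_{k+1}) = e(\tilde{\pi}_S(x_k)) \le \eta_k$; since $e(x_k) \ge 0$ and $t_{\min} \in (0,1)$, this is trivially dominated by $(1-t_{\min})\, e(x_k) + \eta_k$. For a successful iteration at index $k \ge k_2$ we have $x_{k+1} = \bar{x}_k = x_k + t_k p_k$, where $t_k$ comes from the line search \eqref{ls} (which is invoked only after \eqref{dnkN} holds), so Lemma~\ref{Lemma1} yields $t_k \ge t_{\min}$, while $t_k \le 1$ by construction and $p_k$ was built from a projection residual obeying \eqref{approxlambda0}. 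Hence the hypotheses of Lemma~\ref{lemafes} are met, and it delivers exactly $e(x_{k+1}) = e(\bar{x}_k) \le (1-t_{\min})\, e(x_k) + \eta_k$. Combining the two cases, the desired recursion holds for all $k \ge k_2$.

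It remains to conclude. Since $\sum_k \eta_k^2 \le \bar{\eta} < \infty$ by \eqref{etak}, we have $\eta_k \to 0$. Setting $\theta := 1 - t_{\min} \in [0,1)$ and applying Lemma~\ref{konvergencija} to the shifted sequence $\{e(x_{k_2+j})\}_{j\ge 0}$ together with $\{\eta_{k_2+j}\}_{j\ge 0}$, we obtain $\lim_{k\to\infty} e(x_k) = 0$. Finally, if $\tilde{x} = \lim_{k \in K} x_k$ is any accumulation point of $\{x_k\}$, then by continuity of $x \mapsto \|Ax-b\|$ we get $\|A\tilde{x} - b\| = \lim_{k\in K} e(x_k) = 0$, so $\tilde{x} \in S$ and $\tilde{x}$ is feasible.

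I do not expect a genuine obstacle here: the proof is a direct assembly of Lemmas~\ref{projekcija}, \ref{Lemma1}, \ref{lemafes} and \ref{konvergencija}. The one point that deserves care is verifying that the unsuccessful branch really produces an iterate with a controlled residual (so that Lemma~\ref{projekcija} applies) and that in the successful branch all the hypotheses of Lemma~\ref{lemafes} — in particular $t_k \ge t_{\min}$ via Lemma~\ref{Lemma1} and the residual bound \eqref{approxlambda0} — remain in force in the full-sample regime $N_k = N$. Once those checks are made, the statement follows immediately.
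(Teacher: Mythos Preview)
Your proof is correct and in fact cleaner than the paper's. The paper splits into three cases (all iterations eventually successful; all eventually unsuccessful; infinitely many of each) and in the mixed case runs a block argument bounding $e(x_{k+i})\le \eta_{k-1}/t_{\min}$, where $k-1$ is the last unsuccessful index before a run of successful ones. You instead notice that the unsuccessful-branch estimate $e(x_{k+1})\le \eta_k$ is already dominated by $(1-t_{\min})\,e(x_k)+\eta_k$, so a single recursion holds uniformly in both branches and Lemma~\ref{konvergencija} applies once. This buys you a shorter, case-free argument; the paper's route, while valid, gains nothing extra and is simply more laborious.
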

{\em Proof. }  We will distinguish three different  cases: 1) all iterations are successful for $ k $ large enough; 2) all iterations are unsuccessful for $ k $ large enough; and 3) we have an infinite sequence of  successful and an infinite sequence of unsuccessful iterations.

Case 1. Assume, without loss of generality,  that for all $ k \geq k_2 $ we have $ k \in K_{su}. $
In that case we have $ x_{k+1} = \bar{x}_k $ and by Lemma \ref{lemafes} 
$$ e(x_{k+1}) \leq (1- t_{\min}) e(x_k) + \eta_k, $$ so the statement holds by Lemma \ref{konvergencija}. 

Case 2. Without loss of generality assume that $ k \in K_{un}$ for all $ k \geq k_2$.  Then we have $$ x_{k+1} = \tilde{\pi}_S(x_k)$$ for all $ k $ and by Lemma \ref{projekcija}
$$ e(x_{k+1}) \leq \eta_k. $$   
Given that $ \lim_{k\to \infty} \eta_k = 0 $ we obtain $ \lim_{k \to \infty} e(x_k) = 0. $

Case 3. Assume now that  both $ K_{su}$ and $ K_{un}$ are infinite. Let $ k-1 \geq k_2$ and assume that $ k-1 \in K_{un}, \; k, \ldots,k+j-1 \in K_{su} $ and $ k+j \in K_{un}. $ Then we have 
$$ e(x_{k}) = e(\tilde{\pi}_S(x_{k-1})) \leq \eta_{k-1} $$ and for each $i=1,...,j$, by Lemma \ref{lemafes} there holds
$$ e(x_{k+i}) \leq \theta e(x_{k+i-1}) + \eta_{k+i-1},$$ with $\theta:=1-t_{min} \in (0,1)$ and thus by the induction argument we get 
$$ e(x_{k+i}) \leq \theta^i \eta_{k-1}+...+\theta \eta_{k+i-2}+\eta_{k+i-1}.$$
Since $\{\eta_k\}$ is nonincreasing, for each $i=1,...,j$ there holds 
$$e(x_{k+i}) \leq \eta_{k-1} \sum_{t=0}^{i} \theta^t\leq \eta_{k-1} \sum_{t=0}^{\infty} \theta^t= \eta_{k-1}  \frac{1}{1-\theta}=\eta_{k-1}  \frac{1}{t_{min}}.$$
Thus, we can conclude that for each $k\geq k_2$ there holds 
$$ e(x_k) \leq \frac{1}{t_{min}} \eta_{k_{un}},$$
where $k_{un}$ represents the index of last  unsuccessful iteration before the iteration $k$. Letting $k\to \infty$ we conclude that $\lim_{k \to \infty} e(x_k)=0$ in this case as well.

Therefore, in all cases we have $ e(x_k) \to 0. $ If $ \tilde{x} $ is an arbitrary accumulation point of $\{x_k\}$ then clearly $ e(\tilde{x}) = 0 $ and hence $ \tilde{x} $ is feasible.  $ \Box $

\begin{lemma}
    \label{lema3}  Assume that Assumption A\ref{A1} holds and let $ \{x_k\}$ be an iterative sequence generated by IPAS algorithm such that $ N_k = N$ for $ k \geq k_2$ and assume that $ \{x_k\}_{k\geq k_2}$ is bounded.
    If there exists $ k_3 \geq k_2$ such that $ k \in K_{su}$ for all $k \geq k_3$ then each accumulation point  of $ \{x_k\} $ is a stationary point of \eqref{problem}. 
\end{lemma}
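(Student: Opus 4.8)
The plan is to observe that in the regime of this lemma the algorithm becomes a \emph{deterministic} nonmonotone projected gradient method for the true objective $f$: once $N_k=N$ we have $f_{\N_k}=f$, and since every iteration $k\geq k_3$ is successful, the update is always $x_{k+1}=\bar x_k=x_k+t_kp_k$ with $p_k$ obtained from an inexact projection satisfying \eqref{approxlambda0}. No almost-sure qualifiers are therefore needed. First I would record the per-iteration decrease. For $k\geq k_3$, condition \eqref{dnkN} holds, i.e. $(\nabla f(x_k))^Tp_k\leq -c\|p_k\|^2$, and by Lemma \ref{Lemma1} the line search returns $t_k\geq t_{\min}$. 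Substituting \eqref{dnkN} into \eqref{ls} and using $t_k\geq t_{\min}$ together with $(\nabla f(x_k))^Tp_k\leq 0$ gives
\[
f(x_{k+1})\leq f(x_k)+c_1t_k(\nabla f(x_k))^Tp_k+\eta_k^2\leq f(x_k)-c\,c_1\,t_{\min}\,\|p_k\|^2+\eta_k^2
\]
for all $k\geq k_3$.

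Summing this inequality from $k_3$ onward and using that $f$ is bounded below by $f_{low}$ (Assumption A\ref{A1}) and that $\sum_k\eta_k^2\leq\bar\eta<\infty$ by \eqref{etak}, I would conclude that $\sum_{k\geq k_3}\|p_k\|^2<\infty$, hence $\|p_k\|\to 0$. The next step is to pass from the inexact direction $p_k=\tilde\pi_S(y_k)-x_k$ to the exact projected gradient direction $d(x_k)=\pi_S(x_k-\nabla f(x_k))-x_k$ of \eqref{pgd}. Exactly as in the computation leading to \eqref{pd3}, the difference between the exact and inexact projections of $y_k=x_k-\nabla f(x_k)$ equals $A^T(AA^T)^{-1}r(y_k)$, so
\[
\|d(x_k)-p_k\|=\|A^T(AA^T)^{-1}r(y_k)\|\leq C_A\,\eta_k,
\]
where $C_A=\|A^T(AA^T)^{-1}\|$ and $\|r(y_k)\|\leq\eta_k$ by \eqref{approxlambda0}. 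Since $\eta_k\to0$ and $\|p_k\|\to0$, this yields $\|d(x_k)\|\to0$.

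Finally I would assemble the conclusion. Boundedness of $\{x_k\}_{k\geq k_2}$ guarantees that accumulation points exist; let $\tilde x=\lim_{k\in K}x_k$ be one of them. By Lemma \ref{lema2} (its Case 1, all iterations eventually successful) we have $e(x_k)\to0$, hence $\|A\tilde x-b\|=0$ and $\tilde x\in S$. The map $x\mapsto d(x)$ is continuous on $\mathbb{R}^n$, being the composition of the continuous map $x\mapsto x-\nabla f(x)$ with the affine projection operator \eqref{tacnaprojekcija}; therefore $d(\tilde x)=\lim_{k\in K}d(x_k)=0$, using $\|d(x_k)\|\to0$ along the whole sequence. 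Since $\tilde x\in S$ and $d(\tilde x)=0$, Theorem \ref{bmr}(b) implies that $\tilde x$ is a stationary point of \eqref{problem}.

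The argument is essentially routine once the descent inequality is available; the one point requiring care is that the estimate $(\nabla f(x_k))^Tp_k\leq -c\|p_k\|^2$ can be used precisely because successful iterations are, by definition, those satisfying \eqref{dnkN} — without it the nonmonotone decrease in \eqref{ls} could not be controlled by $\|p_k\|^2$. The bookkeeping relating the inexact projection to the exact one reuses the computation already carried out in the proof of Lemma \ref{Lemma-NEWaccept}, so it introduces no new difficulty.
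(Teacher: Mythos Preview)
Your proof is correct and follows essentially the same approach as the paper's: derive the descent inequality $f(x_{k+1})\le f(x_k)-c\,c_1\,t_{\min}\|p_k\|^2+\eta_k^2$ from \eqref{dnkN}, \eqref{ls} and Lemma~\ref{Lemma1}, telescope to obtain $\|p_k\|\to0$, pass to $\|d(x_k)\|\to0$ via the residual bound $\|d(x_k)-p_k\|\le C_A\eta_k$, and conclude with Lemma~\ref{lema2} and Theorem~\ref{bmr}. Your added remarks on continuity of $d$ and the absence of almost-sure qualifiers in this deterministic regime are helpful clarifications but do not change the structure of the argument.
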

{\em Proof.} For each $ k \geq k_3$ we have that $ x_{k+1}$ is successful and hence
we have 
\begin{eqnarray*}
    f(x_{k+1}) & \leq & f(x_k) + c_1 t_k (\nabla f(x_k))^T p_k + \varepsilon_k \\
    & \leq & f(x_k) - c_1 c t_k \|p_k\|^2 + \varepsilon_k\\
    & \leq & f(x_k) - c_1 c t_{\min} \|p_k\|^2 + \varepsilon_k. 
\end{eqnarray*}
Given that $ f $ is  continuous and $\{x_k\}_{k\geq k_2}$ is assumed to be bounded, there must exists a constant $f_{up}$ such that  $ f(x_{k_3}) \leq f_{up}$. Moreover, $f(x_k) \geq f_{low}$ for each $k$  and using the standard arguments we get $\sum_{k=k_3}^{\infty}\|p_k\|^2<\infty$ and $ \lim_{k \to \infty} \|p_k\| = 0.$ Let us denote, as before, the exact projected gradient direction at $ x_k $ by $ d(x_k). $ Then we have 
$$ \|d(x_k) - p_k\| \leq \|A^T(AA^T)^{-1}\|\|r(x_k)\| \leq C_A \eta_k $$ and 
$$ \lim_{k \to \infty}\|d(x_k)\|\leq \lim_{k \to \infty}\|p_k\| + \lim_{k \to \infty}\|d(x_k) - p_k\| \leq \lim_{k\to \infty}\|p_k\| + C_A \eta_k = 0. $$
So, for arbitrary accumulation point $ \tilde{x} = \lim_{k \in K} x_k$ we have $ \|d(\tilde{x})\|= \lim_{k \in K} \|d(x_k)\| = 0. $ By Lemma \ref{lema2} $ \tilde{x} $ is feasible so the statement follows by Lemma \ref{bmr}.  $ \Box $

The following inequality will be used for further analysis. 
\begin{lemma}
    \label{lema4}
    For each $ k \geq k_2$ there holds
  \begin{eqnarray*} & & \nabla^T f(x_k) p_k \leq  -\|p_k\|^2 - p_k^T(\pi_S(y_k)-\tilde{\pi}_S(y_k)) \\ &+& (p_k +\nabla f(x_k)+ \pi_S(y_k)-\tilde{\pi}_S(y_k))^T(\pi_S(x_k)-x_k+\tilde{\pi}_S(y_k)-\pi_S(y_k)). 
  \end{eqnarray*}
\end{lemma}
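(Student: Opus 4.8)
The plan is to work entirely within the full-sample regime $k\geq k_2$, where $f_{\N_k}=f$, so that $y_k=x_k-\nabla f(x_k)$, i.e. $\nabla f(x_k)=x_k-y_k$. First I would record two purely algebraic identities coming from $p_k=\tilde\pi_S(y_k)-x_k$ and $\nabla f(x_k)=x_k-y_k$: namely $p_k+\nabla f(x_k)=\tilde\pi_S(y_k)-y_k$, and hence $p_k+\nabla f(x_k)+\pi_S(y_k)-\tilde\pi_S(y_k)=\pi_S(y_k)-y_k$. Since the right-hand side of this last identity is exactly the first factor appearing in the claimed bound, the whole statement reduces to estimating the scalar $p_k^T(\pi_S(y_k)-y_k)$.

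Next I would compute $\nabla^T f(x_k)p_k+\|p_k\|^2=p_k^T\big(p_k+\nabla f(x_k)\big)=p_k^T\big(\tilde\pi_S(y_k)-y_k\big)$ and split $\tilde\pi_S(y_k)-y_k=\big(\tilde\pi_S(y_k)-\pi_S(y_k)\big)+\big(\pi_S(y_k)-y_k\big)$. This already yields $\nabla^T f(x_k)p_k=-\|p_k\|^2-p_k^T\big(\pi_S(y_k)-\tilde\pi_S(y_k)\big)+p_k^T\big(\pi_S(y_k)-y_k\big)$, which reproduces the first two terms of the asserted inequality verbatim, so it only remains to show $p_k^T(\pi_S(y_k)-y_k)=(\pi_S(y_k)-y_k)^T(\pi_S(x_k)-x_k+\tilde\pi_S(y_k)-\pi_S(y_k))$; combined with the identity from the first step, this closes the proof (in fact with equality, which is stronger than the stated $\leq$).

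For that last identity I would invoke the orthogonality of the exact projection onto the affine set $S$. By \eqref{tacnaprojekcija}, $y_k-\pi_S(y_k)=A^T(AA^T)^{-1}(Ay_k-b)\in\mathrm{range}(A^T)$, while $\pi_S(x_k)$ and $\pi_S(y_k)$ both lie in $S$, so $\pi_S(y_k)-\pi_S(x_k)\in\ker A$; therefore $(\pi_S(y_k)-y_k)^T(\pi_S(y_k)-\pi_S(x_k))=0$. Now I would expand the difference $p_k^T(\pi_S(y_k)-y_k)-(\pi_S(y_k)-y_k)^T(\pi_S(x_k)-x_k+\tilde\pi_S(y_k)-\pi_S(y_k))$: using $p_k+x_k-\tilde\pi_S(y_k)=0$, the vector multiplying $(\pi_S(y_k)-y_k)$ collapses to $\pi_S(y_k)-\pi_S(x_k)$, and the resulting inner product vanishes by the orthogonality just noted, giving the desired identity.

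The computation is short and linear, with no genuinely hard step; the only thing that requires care is the bookkeeping of which projections are exact and which are inexact, together with the fact that $x_k$ itself need not be feasible. The device that makes everything work is introducing the exact projection $\pi_S(x_k)\in S$ and using that $S$ is an affine subspace, so that $\pi_S(x_k)-\pi_S(y_k)\in\ker A$ is orthogonal to $\mathrm{range}(A^T)\ni y_k-\pi_S(y_k)$.
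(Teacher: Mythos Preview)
Your argument is correct and, in essence, coincides with the paper's proof: both reduce the claim to the relation $(\pi_S(y_k)-y_k)^T(\pi_S(x_k)-\pi_S(y_k))=0$ (equivalently $\geq 0$), applied after the same algebraic substitutions $p_k=\tilde\pi_S(y_k)-x_k$ and $\nabla f(x_k)=x_k-y_k$. The paper obtains this relation from the variational characterization of the projection, \eqref{optKKT}, with $y=y_k$ and $z=\pi_S(x_k)$; you obtain it from the explicit affine formula \eqref{tacnaprojekcija}, noting $y_k-\pi_S(y_k)\in\mathrm{range}(A^T)$ and $\pi_S(x_k)-\pi_S(y_k)\in\ker A$. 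Since $S$ is affine, the variational inequality is in fact an equality, so your observation that the lemma holds with ``$=$'' is correct and slightly sharper than what the paper records.
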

{\em Proof.} For any $ y \in \mathbb{R}^n $ we have that $ \pi_S(y)$ is a solution of convex problem 
$$ \min_{Az=b} \frac{1}{2}\|z-y\|^2.$$ Denoting $ h(z) =\frac{1}{2}\|z-y\|^2 $ we can state the KKT conditions as follows. A point $ z^* $ is a solution of $ \min_{Az=b} h(z)$ iff $ (\nabla h(z^*))^T(z-z^*) \geq 0$ for all $ z $ such that $ Az=b. $ The gradient of $ h $ is given by $ \nabla h(z) = z-y$ and hence the optimality condition yields  
\begin{equation}
    \label{optKKT}
    (\pi_S(y)-y)^T(z-\pi_S(y)) \geq 0, \mbox{ for all feasible } z.  
\end{equation}Let us now consider $ k \geq k_2$ and take $ y = y_k = x_k - \nabla f(x_k)$ and $ z=\pi_S(x_k)$ in \eqref{optKKT}. We get 
\begin{eqnarray*}
    (\pi_S(y_k) - y_k \pm \tilde{\pi}_S(y_k))^T(\pi_S(x_k) \pm x_k - \pi_S(y_k) \pm \tilde{\pi}_S(y_k) )\geq 0.
\end{eqnarray*}
Recall that $ p_k = \tilde{\pi}_S(y_k) - x_k. $ The previous inequality actually states
$$ (p_k + \nabla f(x_k) + \pi_S(y_k) - \tilde{\pi}_S(y_k))^T (-p_k + \tilde{\pi}_S(y_k) - \pi_S(y_k) + \pi_S(x_k) - x_k) \geq 0 $$
and the statement follows. $ \Box $

\begin{lemma}
    \label{lema5}
    Assume that Assumption A\ref{A1} holds and let $ \{x_k\}$ be an iterative sequence generated by IPAS algorithm such that $ N_k = N$ for $ k \geq k_2$ and assume that $ \{x_k\}_{k\geq k_2}$ is bounded.
     If the sequence of unsuccessful iterations $\{x_k \}_{k\in K_{un}} $ is infinite  then there exists an accumulation point $ \tilde{x}$ of $ \{x_k\} $ such that $ \tilde{x}$ is stationary point for the problem \eqref{problem}. 
\end{lemma}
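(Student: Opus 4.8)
The plan is to exploit the structural inequality of Lemma~\ref{lema4} together with the defining property of unsuccessful iterations to force $\|p_k\|\to 0$ along $K_{un}$, and then to pass to the limit along a convergent subsequence of the (bounded, infinite) set $\{x_k\}_{k\in K_{un}}$. First I would record the elementary error estimates coming from the inexact projections. Writing $\tilde\lambda_k(y_k)=(AA^T)^{-1}(Ay_k-b)+(AA^T)^{-1}r(y_k)$ and comparing with \eqref{tacnaprojekcija} gives $\pi_S(y_k)-\tilde\pi_S(y_k)=A^T(AA^T)^{-1}r(y_k)$, so that $\|\pi_S(y_k)-\tilde\pi_S(y_k)\|\le C_A\eta_k$ and $\|d(x_k)-p_k\|\le C_A\eta_k$ with $C_A=\|A^T(AA^T)^{-1}\|$; similarly $\|\pi_S(x_k)-x_k\|\le C_A e(x_k)$. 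Since $\{x_k\}_{k\ge k_2}$ is bounded and $\nabla f$ is continuous (Assumption~A\ref{A1}), the quantities $\nabla f(x_k)$, $p_k=-\nabla f(x_k)-A^T\tilde\lambda_k(y_k)$, and the above projection differences are all uniformly bounded, so the first factor in the last term of Lemma~\ref{lema4} is bounded by some constant $B$, while its second factor and $|p_k^T(\pi_S(y_k)-\tilde\pi_S(y_k))|$ are $O(e(x_k)+\eta_k)$. Substituting into Lemma~\ref{lema4} yields, for every $k\ge k_2$,
\[
\nabla^T f(x_k)p_k\le-\|p_k\|^2+C_A\eta_k\|p_k\|+BC_A\bigl(e(x_k)+\eta_k\bigr).
\]

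Next, for $k\in K_{un}$ the direction $p_k$ violates \eqref{dnkN}, i.e. $\nabla^T f(x_k)p_k>-c\|p_k\|^2$. Combining this with the displayed bound gives $(1-c)\|p_k\|^2<C_A\eta_k\|p_k\|+BC_A(e(x_k)+\eta_k)$ for all $k\in K_{un}$. Reading this as a quadratic inequality in $\|p_k\|$, and using $c\in(0,1)$ together with $\eta_k\to0$ and $e(x_k)\to0$ (the latter by Lemma~\ref{lema2}, whose hypotheses are in force), the positive root of the associated quadratic tends to $0$, hence $\lim_{k\in K_{un}}\|p_k\|=0$ and consequently $\lim_{k\in K_{un}}\|d(x_k)\|\le\lim_{k\in K_{un}}\bigl(\|p_k\|+C_A\eta_k\bigr)=0$. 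Since $\{x_k\}_{k\in K_{un}}$ is bounded and infinite, it has a subsequence $x_k\to\tilde x$, $k\in K\subseteq K_{un}$; by continuity of $d$ (the map $x\mapsto\pi_S(x-\nabla f(x))-x$ is continuous, $\pi_S$ being affine) we obtain $d(\tilde x)=\lim_{k\in K}d(x_k)=0$, while Lemma~\ref{lema2} ensures $\tilde x$ is feasible, so Theorem~\ref{bmr}~b) identifies $\tilde x$ as a stationary point of \eqref{problem}.

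I expect the only real obstacle to be the careful accounting of the several inexact-projection remainder terms appearing in Lemma~\ref{lema4}, and in particular verifying that the bracketed ``first factor'' there is uniformly bounded over the bounded region containing the iterates; once that is in place, the quadratic-inequality step and the passage to a limit are routine. Note that, unlike the eventually-all-successful case treated in Lemma~\ref{lema3}, here no telescoping sum of function decreases and no convexity are used — the control on $\|p_k\|$ comes entirely from the pointwise unsuccessful-iteration test combined with the vanishing projection errors.
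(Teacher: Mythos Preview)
Your proposal is correct and follows essentially the same route as the paper: the paper also combines Lemma~\ref{lema4} with the failure of \eqref{dnkN} on $K_{un}$, the inexact-projection estimates $\|\pi_S(y_k)-\tilde\pi_S(y_k)\|\le C_A\eta_k$ and $\|\pi_S(x_k)-x_k\|\le C_A e(x_k)$, and Lemma~\ref{lema2}, then extracts a convergent subsequence and invokes Theorem~\ref{bmr}. The only cosmetic difference is that the paper phrases the step ``$\|p_k\|\to 0$ along $K_{un}$'' as a proof by contradiction (assuming $\|p_k\|\ge\varepsilon$ and letting $k\to\infty$ in $K_{un}$), whereas you obtain it directly via the quadratic inequality; your version is in fact slightly sharper, since it shows the full limit along $K_{un}$ rather than merely the existence of a subsequence.
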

{\em Proof.} 
First of all let us notice that boundedness of $ \{x_k\}$ together with A\ref{A1} implies that $ \{p_k\}$ is bounded as well. Namely, for $ y_k = x_k-\nabla f(x_k) $ we have 
\begin{eqnarray*}
    p_k & = & \tilde{\pi}_S(y_k) - x_k = y_k - A^T\tilde{\lambda}(y_k) - x_k  \\ 
    & = & - \nabla f(x_k) - A^T((AA^T)^{-1} (Ay_k-b) + (AA^T)^{-1}r(y_k)). 
\end{eqnarray*}
Now, for $C_{A}= \|A^T (AA^T)^{-1}\|$, having $ \|x_k\|\leq C_x,  \|\nabla f(x_k)\| \leq C_g,  $ and therefore $ \|y_k\| \leq \|x_k\| + \|\nabla f(x_k)\| \leq C_x + C_g, $ with  $ \|r(y_k)\|\leq \eta_k $ by conditions of IPAS algorithm we get 
\begin{equation}
    \label{db}
 \|p_k\| \leq C_g + C_A(\|A\|(C_x+C_g)+\|b\|+ \eta_k):=C_p.  \end{equation} 
 
Let us assume that there exists $ \tilde{\varepsilon} > 0 $ such that $ \|p_k\| \geq \tilde{\varepsilon} > 0 $ for all $ k \geq k_2. $
We will consider unsuccessful iterations, i.e. $ k \geq k_2, k \in K_{un}. $ For these iterations we have $ (\nabla f(x_k))^T d_k > -c\|p_k\|^2. $ Lemma \ref{lema4} implies 
\begin{eqnarray} \label{nj1}
    0 & < & -(1-c)\|p_k\|^2 - p_k^T(\pi_S(y_k)-\tilde{\pi}_S(y_k)) \nonumber\\
    & + & (p_k + \nabla f(x_k) + \pi_S(y_k)-\tilde{\pi}_S(y_k))^T(\tilde{\pi}_S(y_k)-\pi_S(y_k)+ \pi_S(x_k)-x_k) \nonumber\\
    & \leq & -(1-c)\varepsilon^2 + \|p_k\|\|\pi_S(y_k)-\tilde{\pi}_S(y_k)\| \nonumber\\
    & + & (\|p_k\| + \|\nabla f(x_k)\| + \|\pi_S(y_k)-\tilde{\pi}_S(y_k))\|)\nonumber\\
    & & \cdot (\|\tilde{\pi}_S(y_k)-\pi_S(y_k)\|+\|\pi_S(x_k)-x_k\|). 
\end{eqnarray}
As 
$$\pi_S(y_k)-\tilde{\pi}_S(y_k) = A^T(AA^T)^{-1}r(y_k),$$ 
we have
\begin{equation}
    \label{nj2}
    \|\tilde{\pi}_S(y_k)-\pi_S(y_k)\|\leq C_A \eta_k. 
\end{equation}
Furthermore, 
\begin{equation}
\label{nj3}
\|\pi_S(x_k)-x_k\| = \|x_k - A^T\lambda(x_k)-x_k\|=\|A^T(AA^T)^{-1}(Ax_k-b)\|\leq C_A e(x_k), 
\end{equation} and $ e(x_k) \to 0  $
by Lemma \ref{lema2}.  Putting together \eqref{db}, \eqref{nj1}-\eqref{nj3} we get 
$$ 0 < -(1-c)\tilde{\varepsilon}^2 + C_p C_A \eta_k + (C_p+C_g+C_A\eta_k)(C_A\eta_k+C_A e(x_k)). $$
Taking the limit for $ k \in K_{un}, k \to \infty$ in the above inequality we get 
$$ 0 \leq -(1-c) \tilde{\varepsilon}^2$$ which can not be true. Thus there is no $ \tilde{\varepsilon} > 0 $ such that $ \|p_k\| \geq \tilde{\varepsilon} > 0 $ for all $ k \geq k_2. $ Therefore there exists  an infinite $ K \subset \mathbb{N}$ such that $ \lim_{k \in K} p_k=0  $ and therefore $ \lim_{k \in K} d(x_k) = 0,$ as in the proof of Lemma \ref{lema3}. As $ \{x_k\}_{k\geq k_2} $ is bounded then there exists $ \tilde{K} \subset K $ such that $ \lim_{k \in \tilde{K}} x_k = \tilde{x} $ and thus $ d(\tilde{x}) = 0. $ By Lemma \ref{lema2} $ \tilde{x} \in S $ and hence    the statement follows by Lemma \ref{bmr}.

\begin{theorem}
    \label{tmain}
    Let Assumption A\ref{A1} holds and assume that $ \{x_k\} $ generated by IPAS algorithm  is bounded. Then a.s. there exists an accumulation point of $ \{x_k\} $ which is a stationary point of \eqref{problem}. 
\end{theorem}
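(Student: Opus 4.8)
The plan is to split the sample space $\Omega$ according to the two mutually exclusive scenarios isolated throughout Section 4 and to invoke the appropriate result in each. Recall $\Omega = \A \cup \bar{\A}$ (a disjoint union), where $\A$ collects the sample paths on which $N_k < N$ for every $k$ (the mini-batch scenario) and $\bar{\A} = \Omega \setminus \A$ collects the paths on which the full sample size $N$ is eventually reached. Since $\{x_k\}$ is bounded, it has at least one accumulation point on every path, so the only thing to prove is that, a.s., at least one such accumulation point is a stationary point of \eqref{problem}.

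First I would dispose of the mini-batch paths. On $\A$ we are exactly in the setting of Lemma \ref{Lemma-NEWaccept}; moreover, since $f$ is continuous by Assumption A\ref{A1} and the iterates are bounded, $|f(x_{k_1})|$ is bounded, so Assumption A\ref{assNEW} is automatically in force. Corollary \ref{das-mini} then applies and gives $P(\lim_{k\to\infty} d(x_k)=0 \mid \A)=1$ together with the fact that, a.s.\ conditioned on $\A$, every accumulation point of $\{x_k\}$ is feasible and stationary. In particular, conditioned on $\A$, there is a.s.\ a stationary accumulation point.

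Next I would treat $\bar{\A}$. On such a path there is $k_2$ with $N_k=N$ for all $k\ge k_2$, so $\{x_k\}_{k\ge k_2}$ is bounded by hypothesis and we may classify iterations $k\ge k_2$ as successful ($k\in K_{su}$) or unsuccessful ($k\in K_{un}$). Exactly one of two things happens: either $K_{un}$ is finite, so there is $k_3\ge k_2$ with $k\in K_{su}$ for all $k\ge k_3$, and Lemma \ref{lema3} shows every accumulation point of $\{x_k\}$ is stationary; or $K_{un}$ is infinite, and Lemma \ref{lema5} produces an accumulation point $\tilde{x}$ that is stationary. Either way, on every path in $\bar{\A}$ there is a stationary accumulation point, and this branch is pathwise deterministic, so no ``a.s.'' qualifier is needed here.

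Finally, combining the two branches by the law of total probability, the probability that an accumulation point of $\{x_k\}$ is stationary is at least $1\cdot P(\A) + 1\cdot P(\bar{\A}) = 1$, which is the claim. There is no genuine obstacle beyond careful bookkeeping: the sufficient-decrease estimate obtained via additional sampling, the Borel--Cantelli argument, the feasibility-of-limit-points lemmas, and the successful/unsuccessful dichotomy in the full-sample regime have all been carried out in Lemmas \ref{Lemma-2cases}--\ref{lema5} and Corollary \ref{das-mini}. The one point to keep track of is that the exceptional null set enters only through the mini-batch branch (via Lemma \ref{Lemma-2cases} and the Borel--Cantelli step), whereas the full-sample branch is deterministic given the path, so the two a.s.\ statements assemble into a single a.s.\ statement on all of $\Omega$.
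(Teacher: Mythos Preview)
Your proposal is correct and follows essentially the same approach as the paper: split into the mini-batch scenario (where boundedness of $\{x_k\}$ and continuity of $f$ yield Assumption A\ref{assNEW}, so Corollary \ref{das-mini} applies) and the full-sample scenario (handled by the successful/unsuccessful dichotomy via Lemmas \ref{lema3} and \ref{lema5}). Your write-up is more explicit than the paper's about the law-of-total-probability bookkeeping and the two sub-cases in the full-sample regime, but the underlying argument is identical.
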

{\em Proof. } If $ N_k < N $ for all $ k $ we have that  the sequence $ \{x_k\}$ is bounded so Assumption A\ref{assNEW} holds and there exists at least one accumulation point of $ \{x_k\}. $ That point is stationary by Corollary \ref{das-mini}. In the case of $ N_k = N $ for $ k $ large enough the statement follows by Lemma \ref{lema3} and Lemma \ref{lema5}. $ \Box$

\section{Numerical results}
In this section we  demonstrate the efficiency of the proposed algorithm through a series of ML experiments. We apply IPAS algorithm to solve equality constrained logistic regression problems.
To evaluate IPAS, we compare its performance with two other notable methods -  Stochastic Sequential Quadratic Programming method (Stochastic SQP) \cite{kapica}, and ASPEN \cite{asp}. Both of these algorithms are designed to deal with equality constrained optimization problems, making them suitable benchmarks for comparison. Additionally, in order to provide better insights into the  proposed method's behavior, we compare four different versions of IPAS obtained by specifying some of the key parameters, namely $\eta_k$ and $\varepsilon_k$, and the sample size update. 

The considered logistic regression problems are given by   
\begin{equation}
\label{logreg}
    \min_{Ax=b}f(x):=\frac{1}{N}\sum_{i=1}^N\log(1+e^{(-y_i(x^Tz_i))}),
\end{equation}
where $N$ is the number of samples, $z_i\in\mathbb{R}^n$ are sample attributes, $y_i\in\{-1,1\}$ are respective labels, and $A\in\mathbb{R}^{m \times n}$,  $b\in\mathbb{R}^m$ define the feasible set.  The equality constraints are simulated to provide a full ranked matrix $A$ with    $m=2n/3$ rows.  The algorithm has been implemented and evaluated on 8 different datasets from LIBSVM repository \cite{SPLiADL}. Each set has been divided into training and test set, in the 4:1 ratio, and the details can be seen in the following table. 

\begin{table}[h!] 
\centering
\begin{tabular}{|l|l|l|l|}
\hline
         & $N_{train}$ & $N_{test}$ & $n$    \\ \hline
MUSHROOM & 6499         & 1625        & 112  \\ \hline
A9A      & 19507        & 4877        & 123  \\ \hline
W7A      & 19754        & 4938        & 300  \\ \hline
MNIST    & 9419         & 2355        & 780  \\ \hline
EPSILON  & 16000        & 4000        & 2000 \\ \hline
CIFAR  & 8000         & 2000        & 3072 \\ \hline
SVHN     & 19557        & 4889        & 3072 \\ \hline
GISETTE  & 4800         & 1200        & 5000 \\ \hline
\end{tabular}
\caption{Classification dataset details}
\label{tab1}
\end{table}
Performance analysis is conducted by  showing the algorithms'  optimality gap against  computational cost measure on training set (Figures \ref{fig1}-\ref{fig8}), and the accuracy measure  tracked over both training and test set (Figure 9). More precisely, the optimality measure is given by $\|x_k-x^*\|_2$, where $x^*$ is obtained by using Matlab \textit{fmincon} function with optimality criterion $10^{-4}$. The computational cost is modeled by number of scalar products needed to reach iteration $k$ and it is greatly influenced by the sample size behavior.  When calculating the number of scalar products, we also take into account the cost of performing inexact projections. We use the Conjugate Gradient method to solve the linear systems inexactly at each iteration, therefore,  every time we use the solver, we account for $(m+4)\cdot iter$ number of scalar products, where $iter$ is the number of iterations Conjugate Gradient method made to achieve the tolerance $\eta_k$.  
It is important to emphasize that we are not relying on the "optimal" implementation, but  rather we are counting the computational cost that is theoretically derived from the algorithm. This is also the case for ASPEN and STO-SQP implementation, thus a fair comparison of all the considered methods is provided.

The initial point  $x_0$ was fixed for all the tested methods and generated by means of standard Gaussian distribution.  For all  tested variants of IPAS method we use additional sample size equal to 1, i.e., $\vert \D_k \vert=1$, and the initial subsampling size $N_0=\lceil 0.01N\rceil$. Furthermore, we set $c_1=  10^{-4}, \beta=0.8, c= 10^{-4}$, and  $C=1$.  For the descent conditions \eqref{ls} and \eqref{ac}, we set the relaxation parameter $\varepsilon_k=(\frac{1}{k^{0.51}})^2$ which satisfies theoretical  recommendations. STO-SQP and ASPEN are implemented with the configurations recommended in \cite{kapica},\cite{asp} respectively.  

Basic IPAS algorithm uses $\eta_k=\frac{1}{k^{0.51}}$ to control inexact projections, which yields relatively slow decay of infeasibility.   When invoked, the increase of the sample size is done by $N_{k+1}=N_k+1$.  
In order to demonstrate the efficacy of inexact projections, we compare IPAS to its version named EXACT which makes uniformly  accurate projections with tolerance $\eta_k=10^{-6}$, while all the other parameters are the same as in basic IPAS.  On the other hand, we also test IPAS-R - a version of IPAS which allows relatively big infeasibility by setting  $\eta_k=10000/k^{0.51}$, while keeps the same configuration of the remaining parameters as IPAS.  
We also examine the influence of the sample size scheduling by testing two more versions of IPAS -  IPAS-M and IPAS-H  -  which update the subsampling size by  $N_{k+1}=1.01N_k$ and $N_{k+1}=1.1N_k$, respectively. All the other parameters are set as for basic IPAS, including $\eta_k$. 
 The following table summarizes the choices of the relevant parameters for tested  IPAS versions.

\begin{table}[h!]
\centering
\begin{tabular}{|l|l|l|l|}
\hline
       & $\eta_k$         & $\varepsilon_k$ & $N_{k+1}$ \\ \hline
IPAS   & $1/k^{0.51}$     & $(1/k^{0.51})^2$    & $N_k+1$   \\ \hline
IPAS-R & $10000/k^{0.51}$ & $(1/k^{0.51})^2$    & $N_k+1$   \\ \hline
EXACT  & $10^{-6}$        & $(1/k^{0.51})^2$    & $N_k+1$   \\ \hline
IPAS-M   & $1/k^{0.51}$     & $(1/k^{0.51})^2$    & $1.01N_k$   \\ \hline
IPAS-H   & $1/k^{0.51}$     & $(1/k^{0.51})^2$    & $1.1N_k$   \\ \hline
\end{tabular}
\caption{IPAS versions' parameters}
\label{tab2}
\end{table}

Figures \ref{fig1}-\ref{fig8} show the behavior of all the consider algorithms applied on problem (\ref{logreg}) with datasets from Table \ref{tab1}. The graphs  on the left show the optimality gap with respect to computational costs (scalar products), on the training set. The graphs in the middle show the portion of the full training sample size used in each iteration, while the graphs on the right track infeasibility measure $e(x_k)=\|Ax_k-b\|_2$ in terms of computational cost.  Notice that none of the IPAS configurations reaches full sample size for the duration of the simulation.  

In Figures \ref{fig1}, \ref{fig4}, \ref{fig5} and \ref{fig8} it can be seen that IPAS-R is closest to the optimal point for the given budget. In Figures \ref{fig2}, \ref{fig3}, \ref{fig6} and \ref{fig7} IPAS-R showed worse results, implying that the over-relaxation was not a good choice in these examples. This is most likely due to heterogeneity of the datasets. Nonetheless, basic IPAS provided  consistently good results overall. It outperforms ASPEN and STO-SQP in almost all the considered examples. ASPEN is farthest from optimality in most cases, which might be the consequence of high cost of evaluating the penalization parameter for every function evaluation and heterogeneity. This behavior is expected given that ASPEN is constructed for more general  problems with nonlinear equality constraints, while IPAS focuses on the linear case. STO-SQP is more competitive and mostly performs better then ASPEN, but it also shows instabilities, especially for EPSILON dataset. Moreover, notice that  IPAS also performs better then EXACT in most of the cases, which implies that inexact projections influenced the reduction of the overall computational costs.  

Considering the sample size behavior, IPAS-H provides the fastest increase as expected, and it is followed by IPAS-M. It is notable that IPAS-H reaches the full sample size in a substantial number of cases, while all the other methods stay well below except for SVHN dataset where IPAS-M also attains the full sample within the given budget. However, this aggressive sample size increase seems to be beneficial in some cases such as for dataset A9A in Figure \ref{fig2}, most probably because of heterogeneous data. Although basic IPAS method showed to be robust and provides a safe choice in general for all the considered problems, it is evident that an the sample size update heavily influences the performance of the algorithm and is is problem dependent. This opens a new research direction for future work. 

Finally,  Figure \ref{grid} shows the training and test accuracies for all the considered datasets. It can be seen that IPAS-R reaches high accuracies, while other IPAS versions also demonstrate large increases in accuracies for the least amount of computational cost for most of the datasets. However, the constraints of the considered problems are simulated and reaching the feasibility may deteriorate the accuracy overall.

\begin{figure}[h!] 
\centering
\includegraphics[width=12cm]{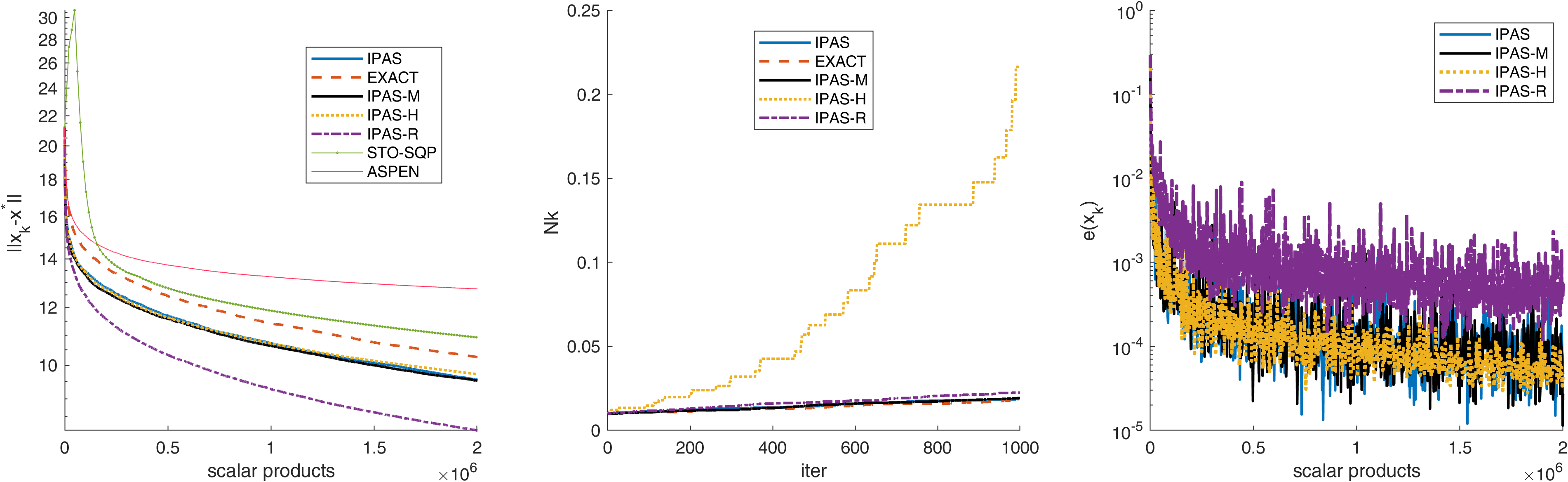} 
 \vspace{2mm}
\begin{tabular}{>{\centering\arraybackslash}m{0.33\textwidth}
                    >{\centering\arraybackslash}m{0.33\textwidth}
                    >{\centering\arraybackslash}m{0.33\textwidth}}
        a) & b) & c)
    \end{tabular}
\caption{{\scriptsize{MUSHROOM dataset, $N=6499, n=112$.  Comparison of algorithms in Table \ref{tab2} with ASPEN and STO-SQP on the training set: a) optimality gap against computational costs; b) sample size behavior in terms of iterations; c) infeasibility measure against computational costs. }}}	
\label{fig1}
\end{figure}

\begin{figure}[h!]
\centering
\includegraphics[width=12cm]{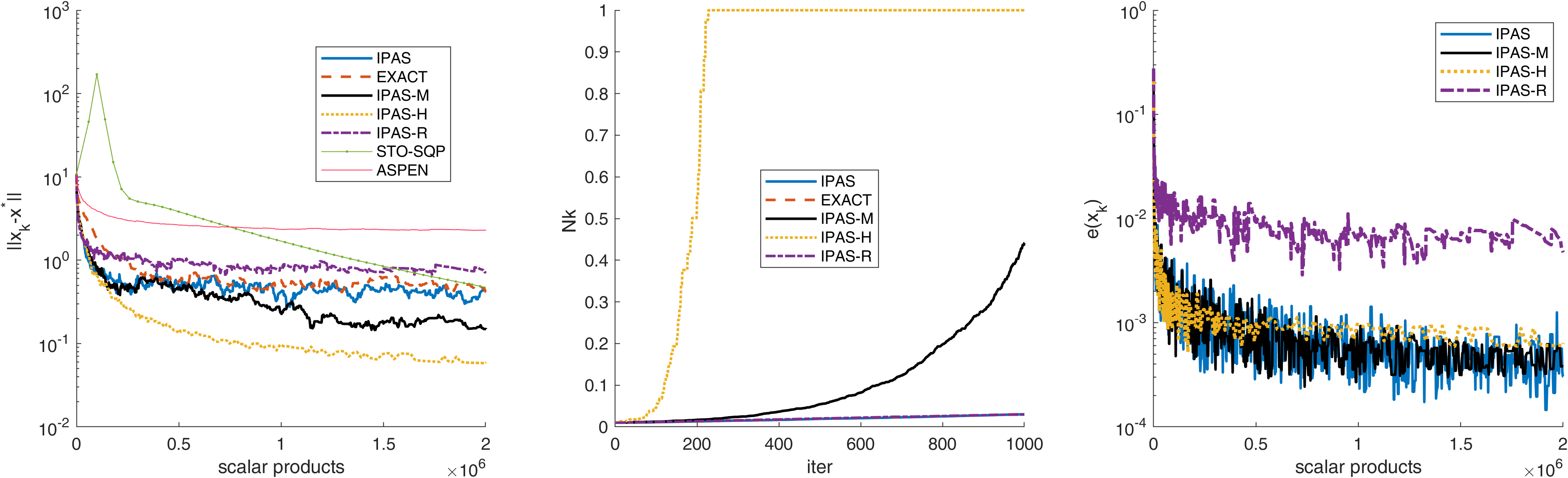} 
 \vspace{2mm}
\begin{tabular}{>{\centering\arraybackslash}m{0.33\textwidth}
                    >{\centering\arraybackslash}m{0.33\textwidth}
                    >{\centering\arraybackslash}m{0.33\textwidth}}
        a) & b) & c)
    \end{tabular}
\caption{{\scriptsize{A9A dataset, $N=19507, n=123$. Comparison of algorithms in Table \ref{tab2} with ASPEN and STO-SQP on the training set: a) optimality gap against computational costs; b) sample size behavior in terms of iterations; c) infeasibility measure against computational costs. }}}	
\label{fig2}
\end{figure}

\begin{figure}[h!]
\centering
\includegraphics[width=12cm]{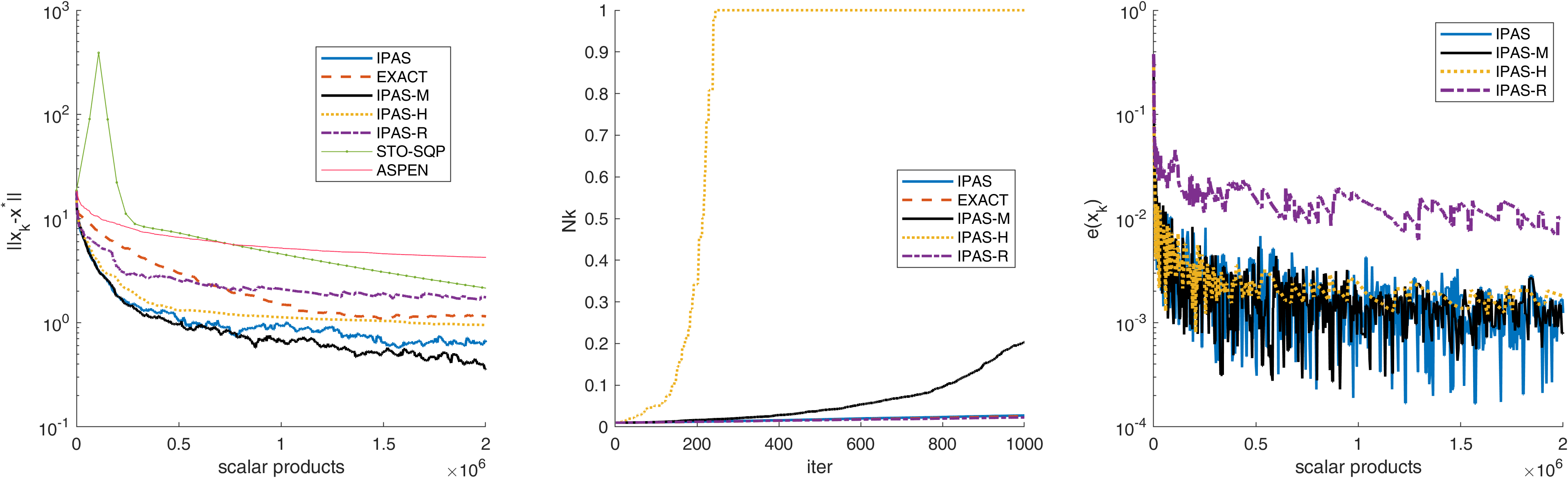} 
 \vspace{2mm}
\begin{tabular}{>{\centering\arraybackslash}m{0.33\textwidth}
                    >{\centering\arraybackslash}m{0.33\textwidth}
                    >{\centering\arraybackslash}m{0.33\textwidth}}
        a) & b) & c)
    \end{tabular}
\caption{{\scriptsize{W7A dataset, $N=19754, n=300$. Comparison of algorithms in Table \ref{tab2} with ASPEN and STO-SQP on the training set: a) optimality gap against computational costs; b) sample size behavior in terms of iterations; c) infeasibility measure against computational costs. }}}	 \label{fig3}
\end{figure}

\begin{figure}[h!]
\centering
\includegraphics[width=12cm]{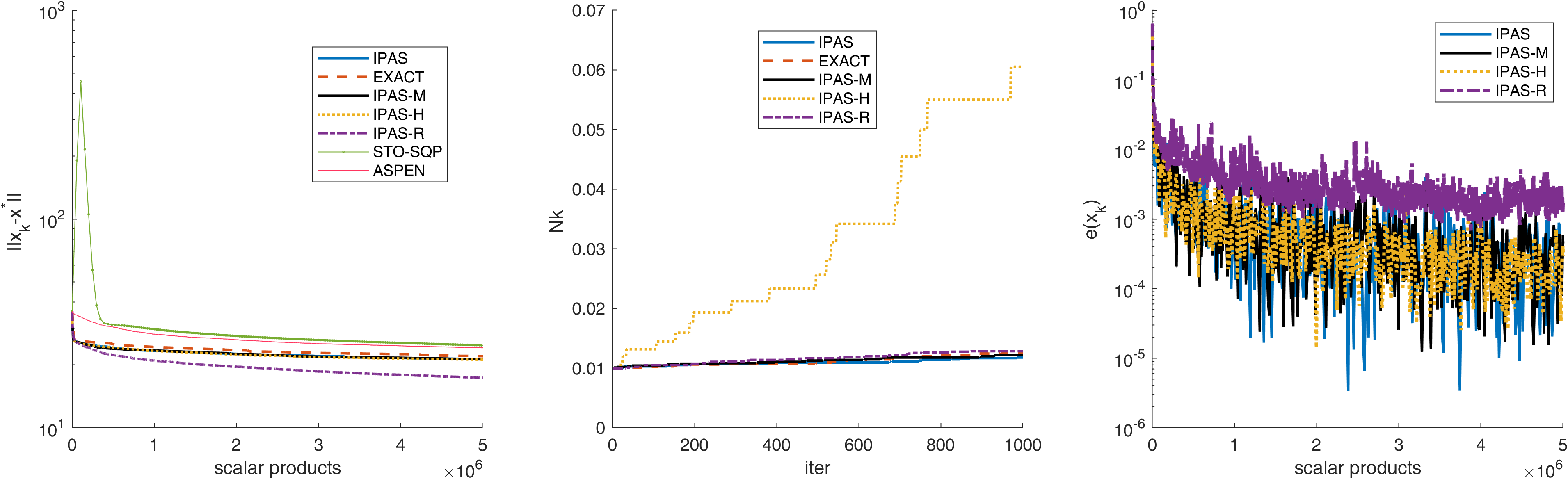} 
 \vspace{2mm}
\begin{tabular}{>{\centering\arraybackslash}m{0.33\textwidth}
                    >{\centering\arraybackslash}m{0.33\textwidth}
                    >{\centering\arraybackslash}m{0.33\textwidth}}
        a) & b) & c)
    \end{tabular}
\caption{{\scriptsize{MNIST dataset, $N=9419, n=780$. Comparison of algorithms in Table \ref{tab2} with ASPEN and STO-SQP on the training set: a) optimality gap against computational costs; b) sample size behavior in terms of iterations; c) infeasibility measure against computational costs. }}}	 \label{fig4}
\end{figure}

\begin{figure}[h!] 
\centering
\includegraphics[width=12cm]{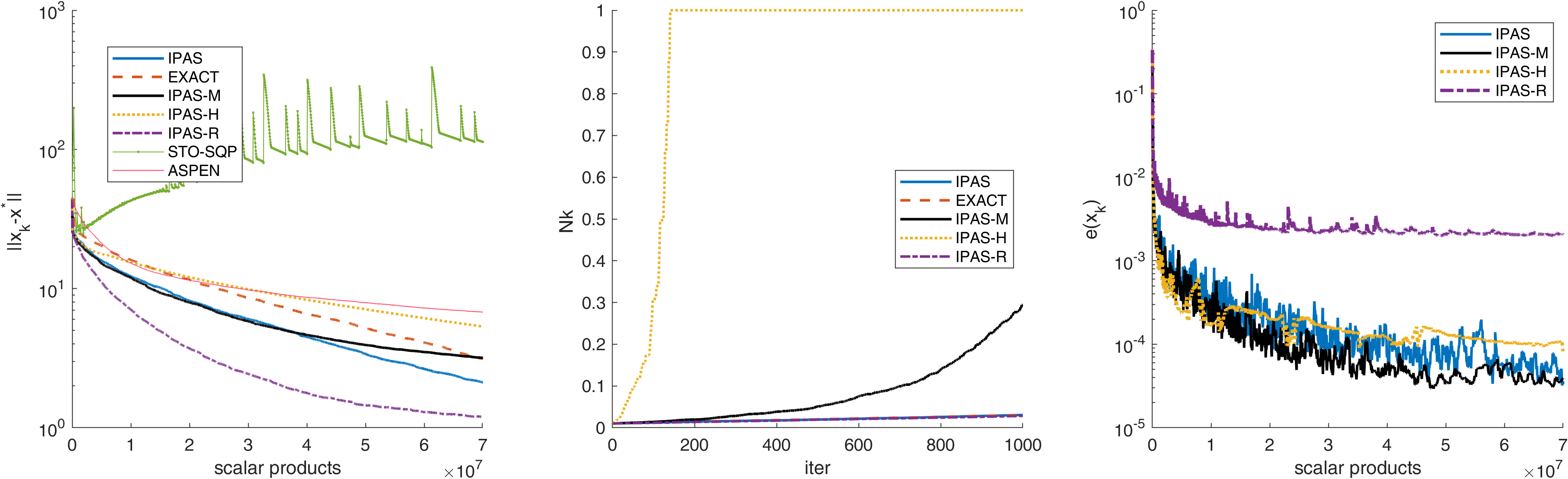} 
 \vspace{2mm}
\begin{tabular}{>{\centering\arraybackslash}m{0.33\textwidth}
                    >{\centering\arraybackslash}m{0.33\textwidth}
                    >{\centering\arraybackslash}m{0.33\textwidth}}
        a) & b) & c)
    \end{tabular}
\caption{{\scriptsize{EPSILON dataset, $N=16000, n=2000$. Comparison of algorithms in Table \ref{tab2} with ASPEN and STO-SQP on the training set: a) optimality gap against computational costs; b) sample size behavior in terms of iterations; c) infeasibility measure against computational costs. }}}	\label{fig5}
\end{figure}

\begin{figure}[h!]
\centering
\includegraphics[width=12cm]{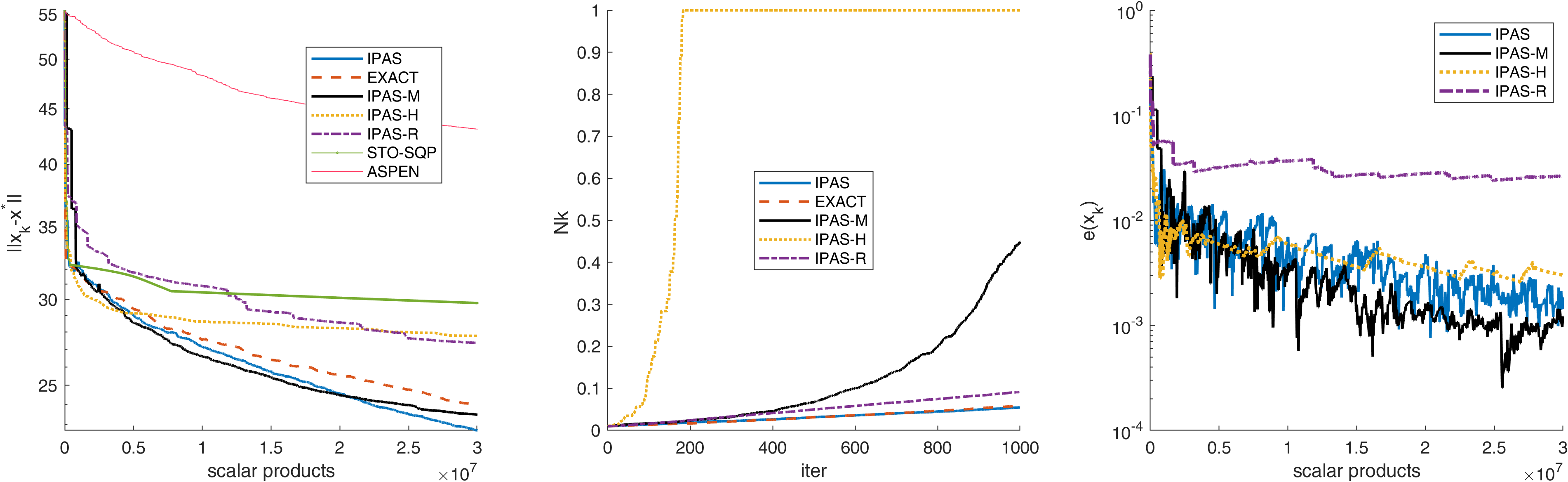} 
 \vspace{2mm}
\begin{tabular}{>{\centering\arraybackslash}m{0.33\textwidth}
                    >{\centering\arraybackslash}m{0.33\textwidth}
                    >{\centering\arraybackslash}m{0.33\textwidth}}
        a) & b) & c)
    \end{tabular}
\caption{{\scriptsize{CIFAR dataset, $N=8000, n=3072$. Comparison of algorithms in Table \ref{tab2} with ASPEN and STO-SQP on the training set: a) optimality gap against computational costs; b) sample size behavior in terms of iterations; c) infeasibility measure against computational costs. }}}	 \label{fig6}
\end{figure}
\newpage 
\begin{figure}[h!] 
\centering
\includegraphics[width=12cm]{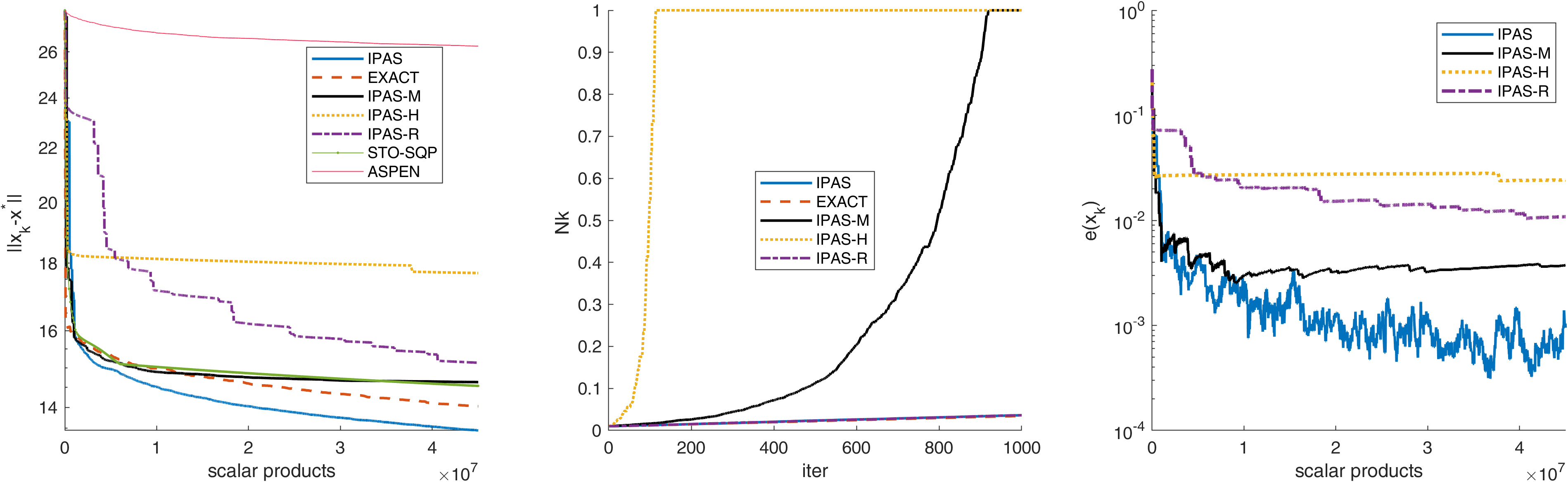} 
 \vspace{2mm}
\begin{tabular}{>{\centering\arraybackslash}m{0.33\textwidth}
                    >{\centering\arraybackslash}m{0.33\textwidth}
                    >{\centering\arraybackslash}m{0.33\textwidth}}
        a) & b) & c)
    \end{tabular}
\caption{{\scriptsize{SVHN dataset, $N=19557, n=3072$. Comparison of algorithms in Table \ref{tab2} with ASPEN and STO-SQP on the training set: a) optimality gap against computational costs; b) sample size behavior in terms of iterations; c) infeasibility measure against computational costs. }}}	\label{fig7}
\end{figure}

\begin{figure}[h!]
\centering
\includegraphics[width=12cm]{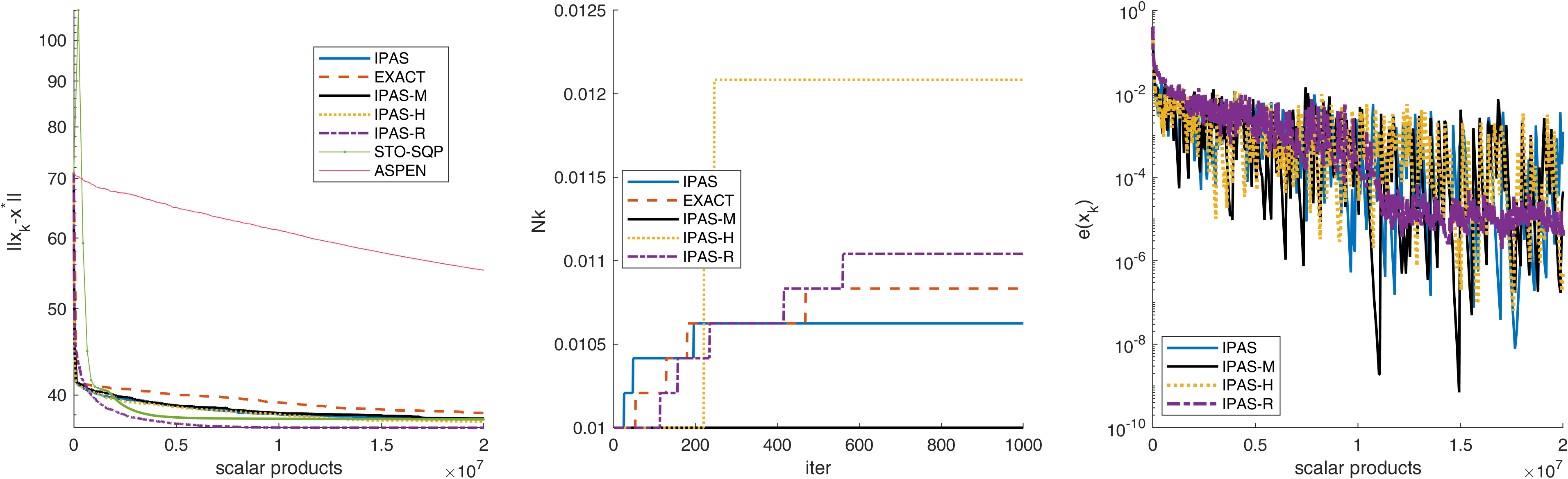} 
 \vspace{2mm}
\begin{tabular}{>{\centering\arraybackslash}m{0.33\textwidth}
                    >{\centering\arraybackslash}m{0.33\textwidth}
                    >{\centering\arraybackslash}m{0.33\textwidth}}
        a) & b) & c)
    \end{tabular}
\caption{{\scriptsize{GISETTE dataset, $N=4800, n=5000$. Comparison of algorithms in Table \ref{tab2} with ASPEN and STO-SQP on the training set: a) optimality gap against computational costs; b) sample size behavior in terms of iterations; c) infeasibility measure against computational costs. }}}	 \label{fig8}
\end{figure}

\begin{figure}[htbp]
\centering

\begin{tabular}{cc}
\includegraphics[width=0.49\textwidth]{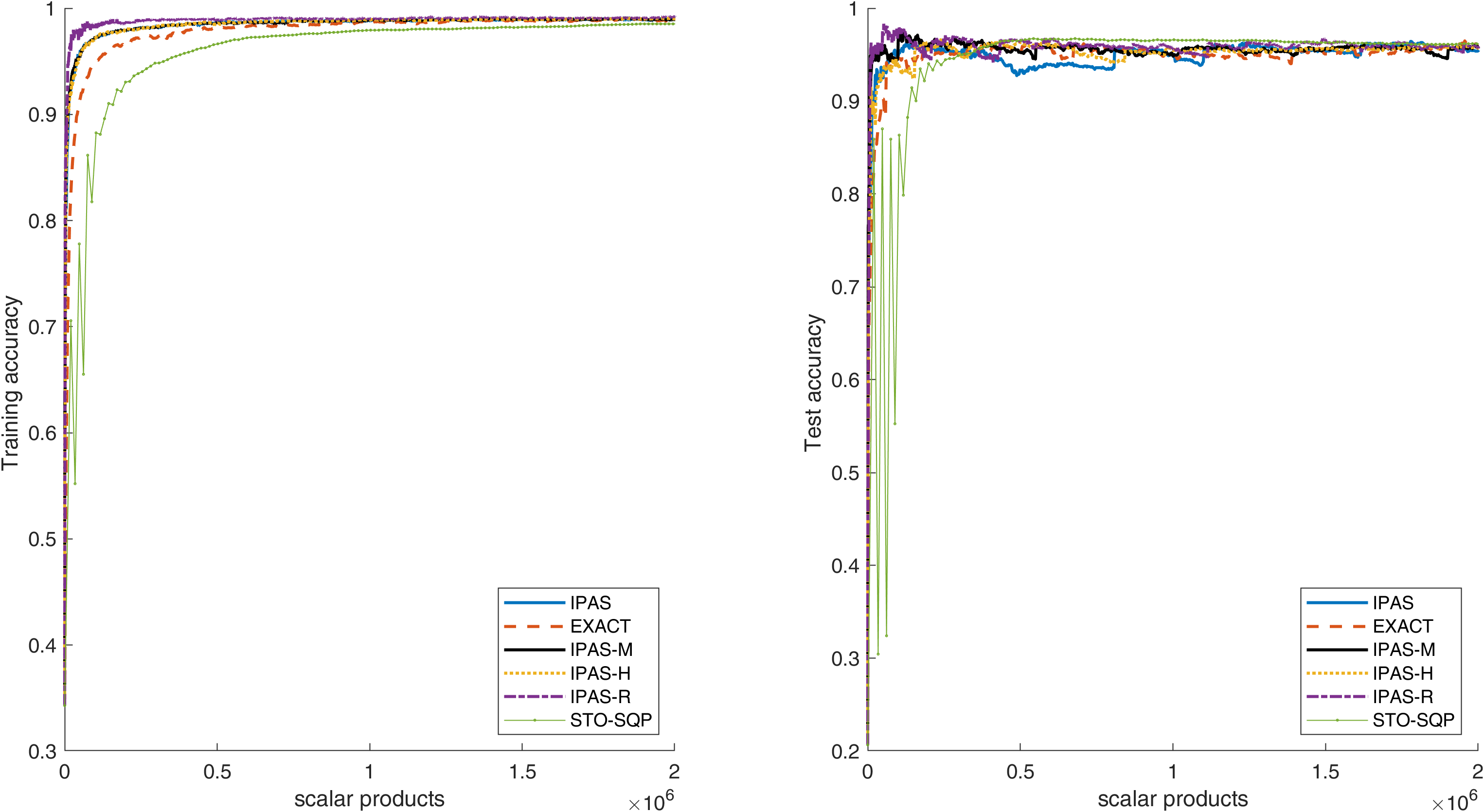} &
\includegraphics[width=0.49\textwidth]{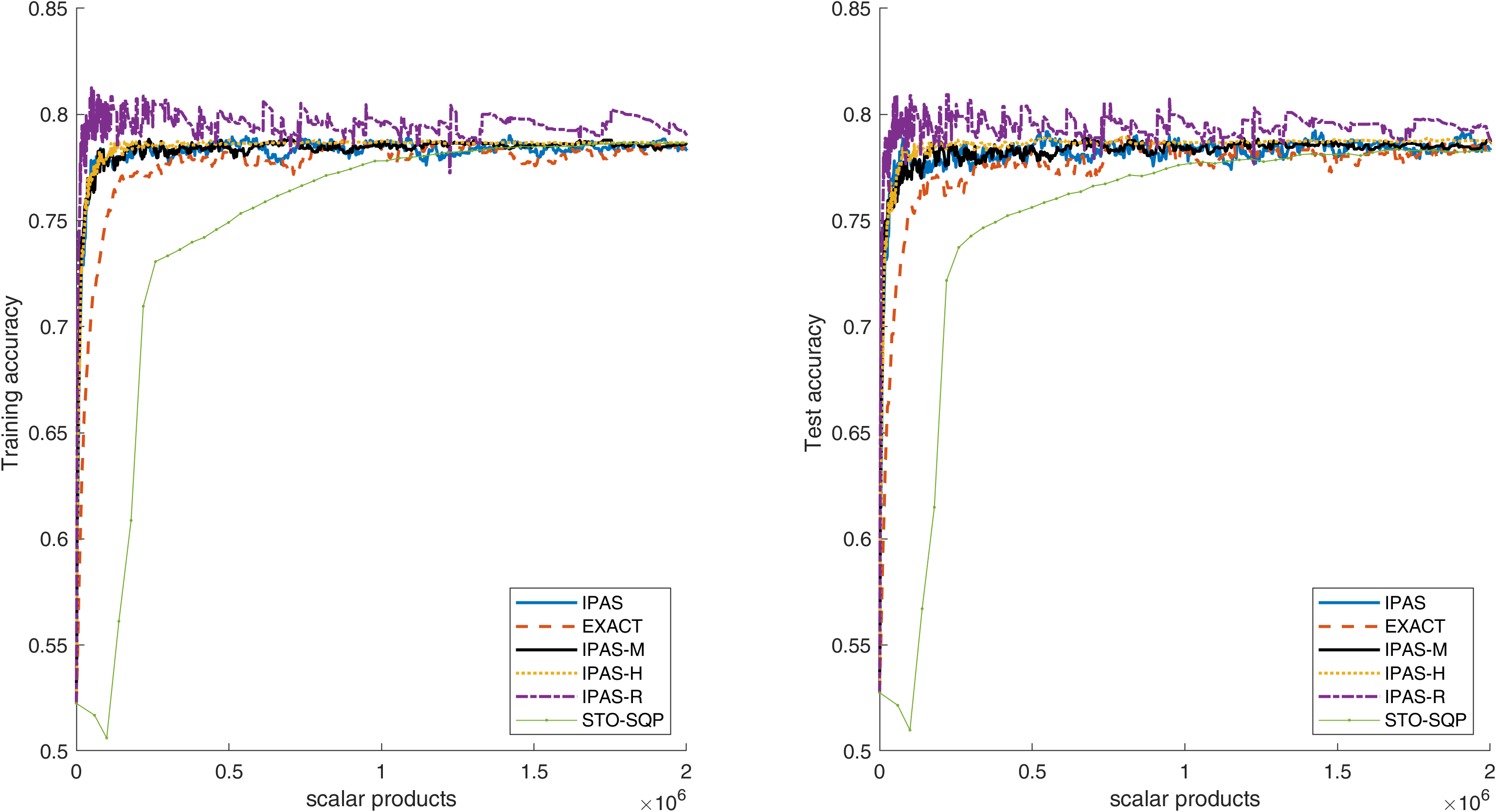} \\
a) & b) \\[3mm]

\includegraphics[width=0.49\textwidth]{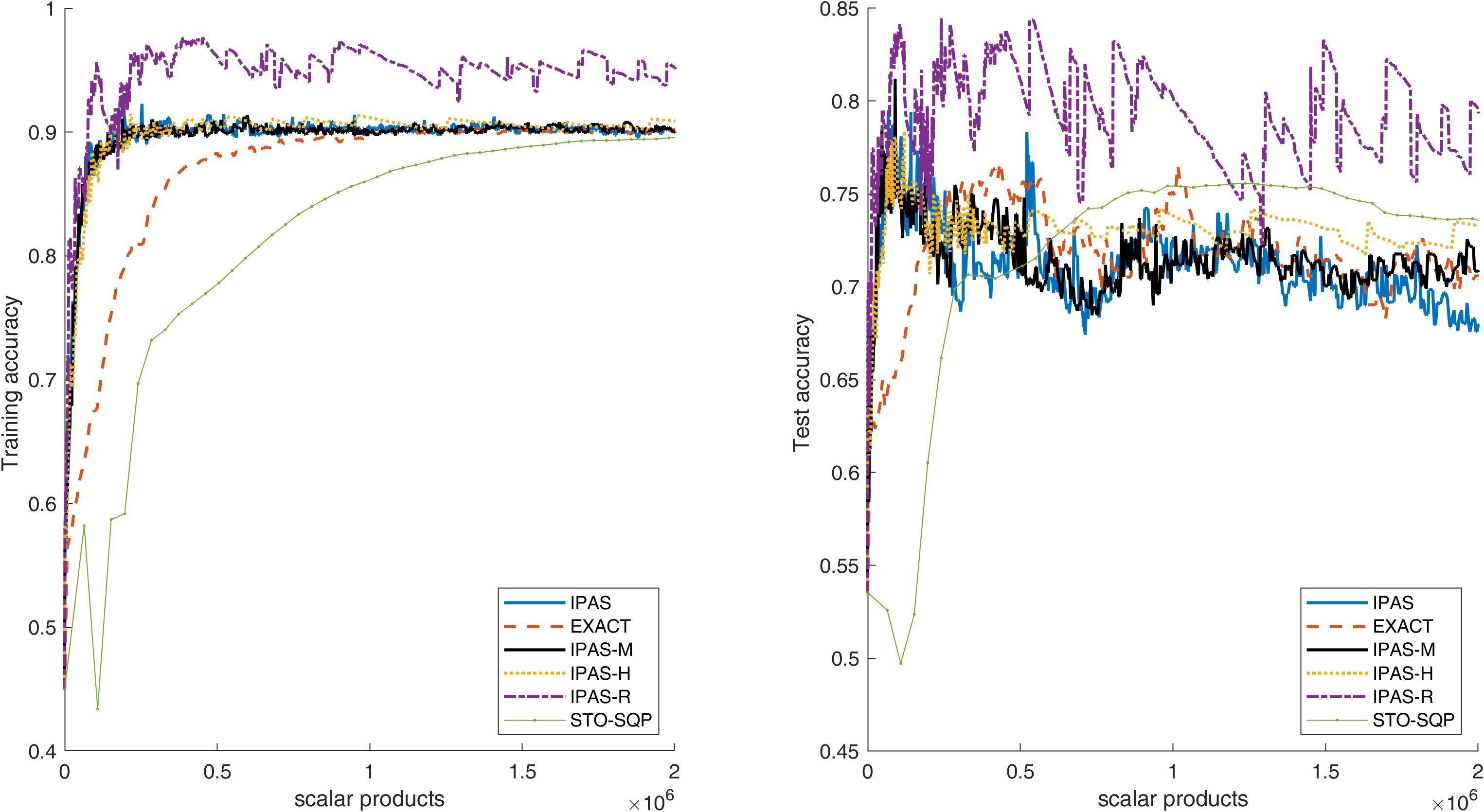} &
\includegraphics[width=0.49\textwidth]{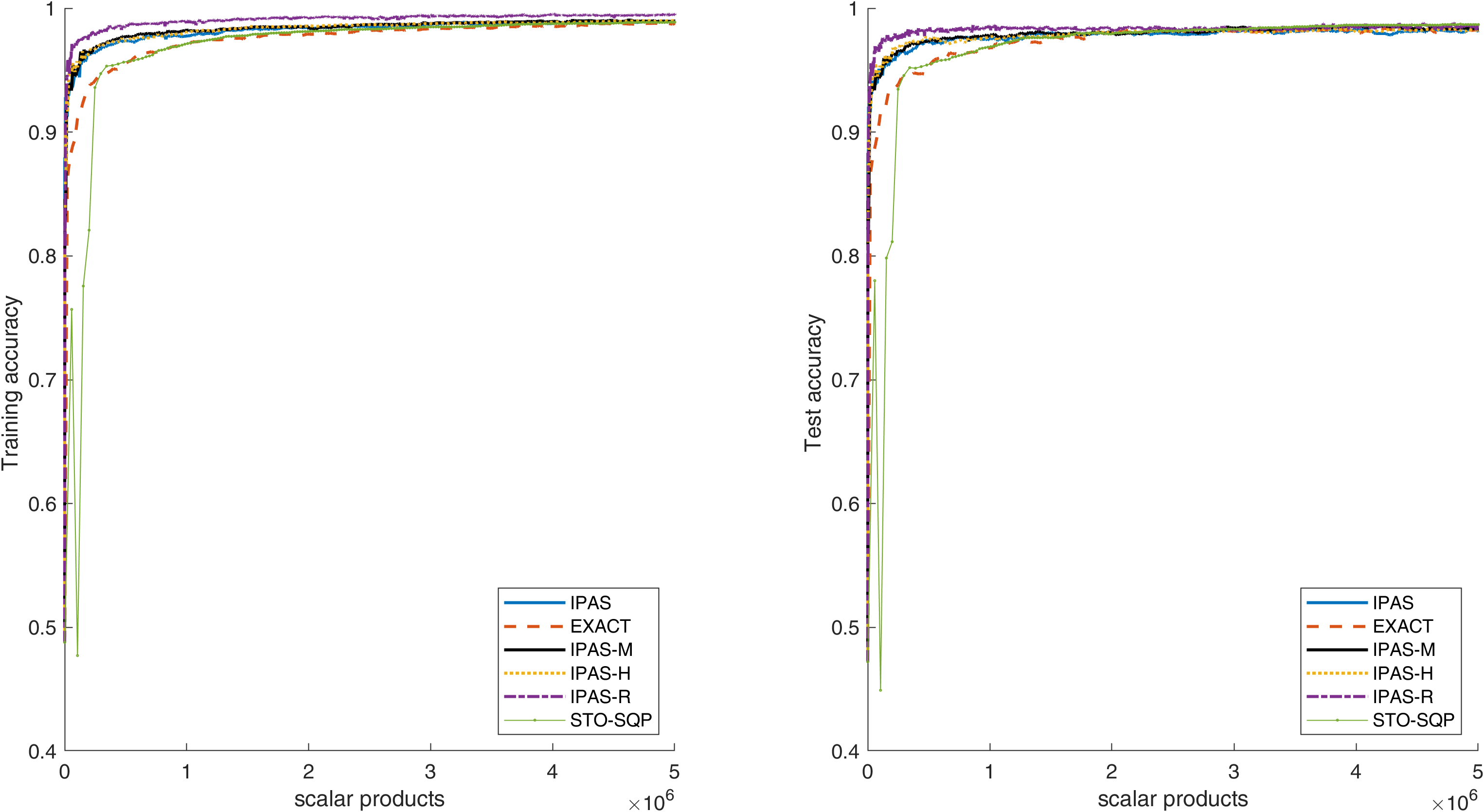} \\
c) & d) \\[3mm]

\includegraphics[width=0.49\textwidth]{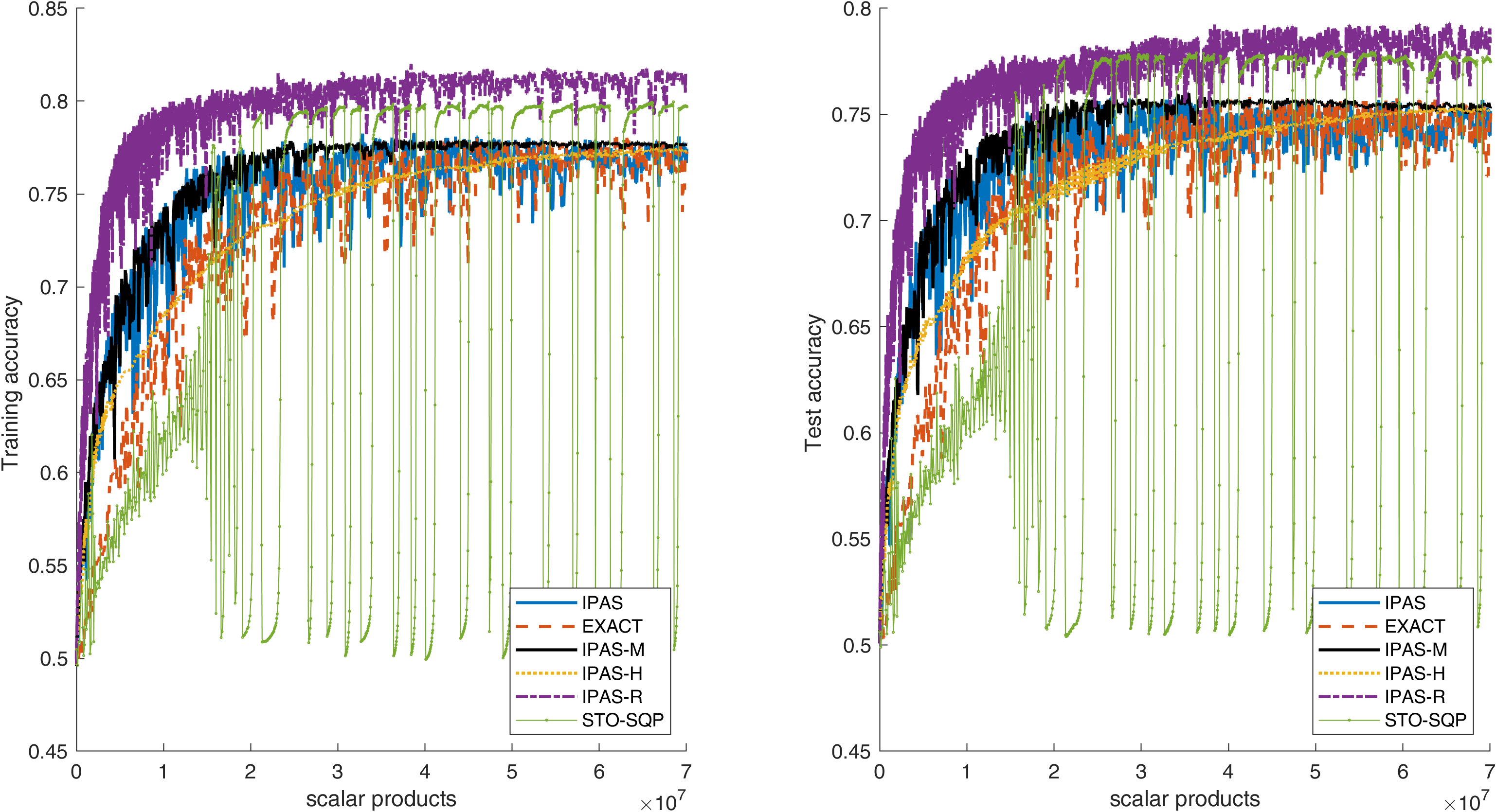} &
\includegraphics[width=0.49\textwidth]{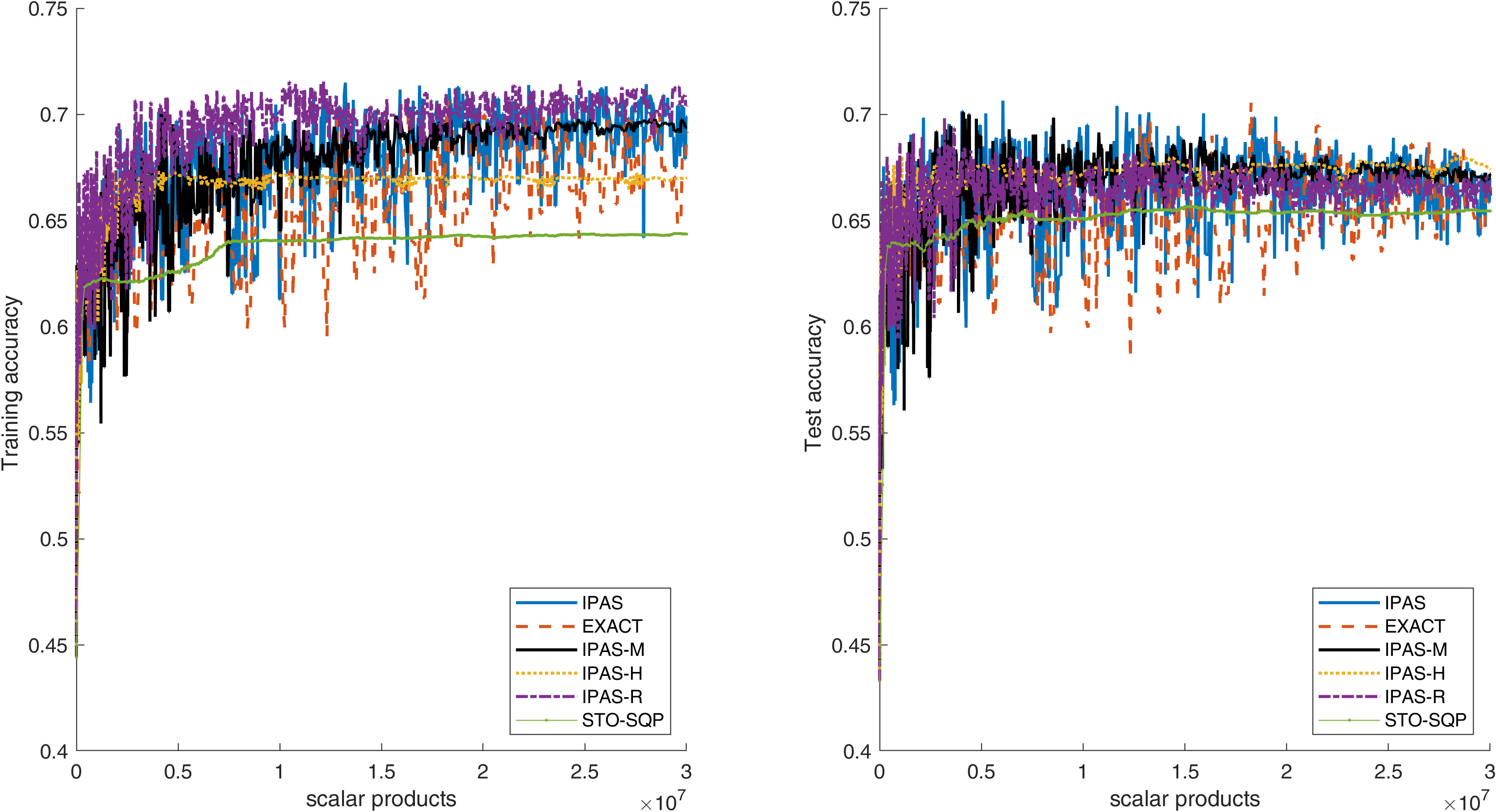} \\
e) & f) \\[3mm]

\includegraphics[width=0.49\textwidth]{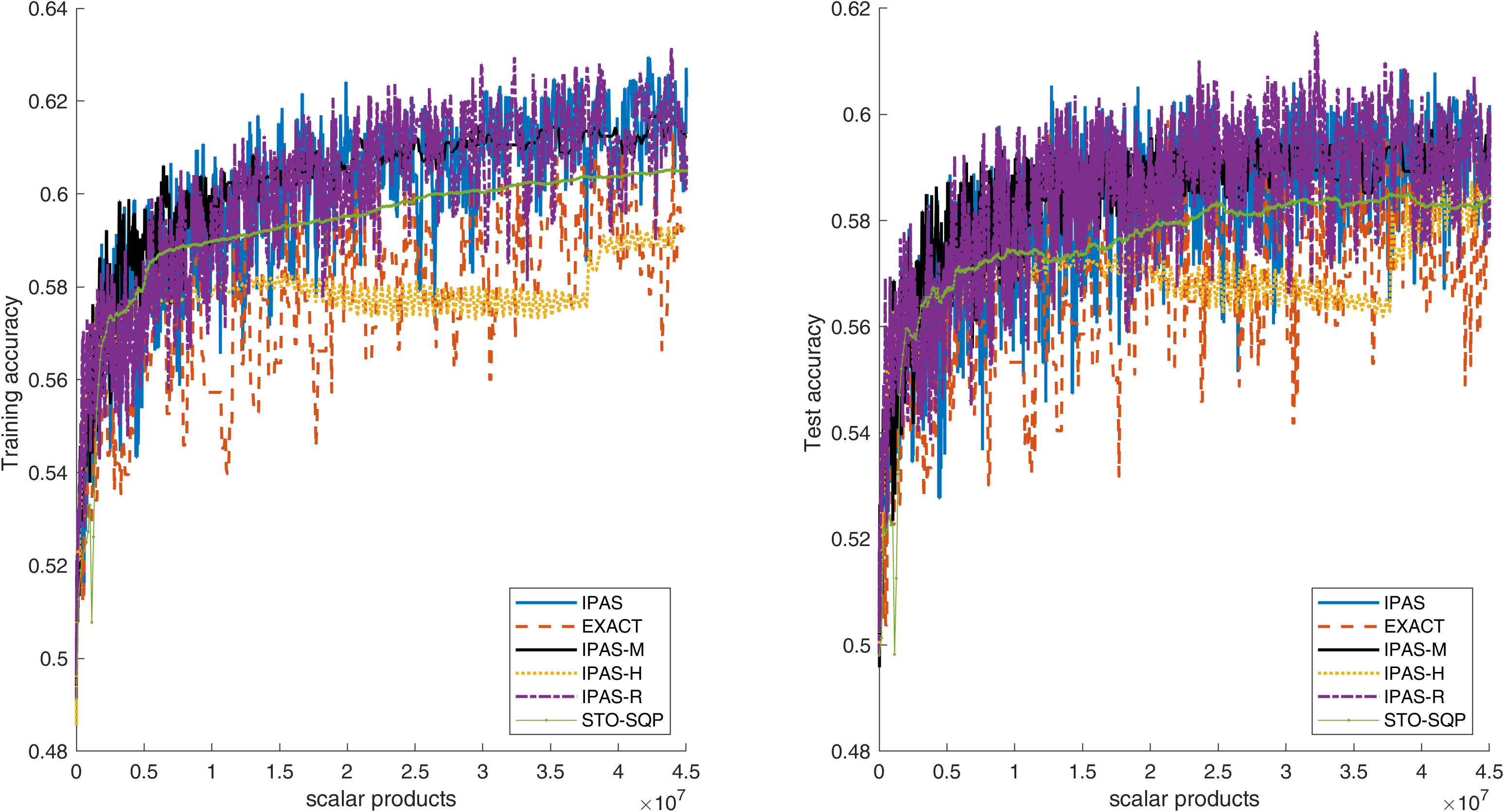} &
\includegraphics[width=0.49\textwidth]{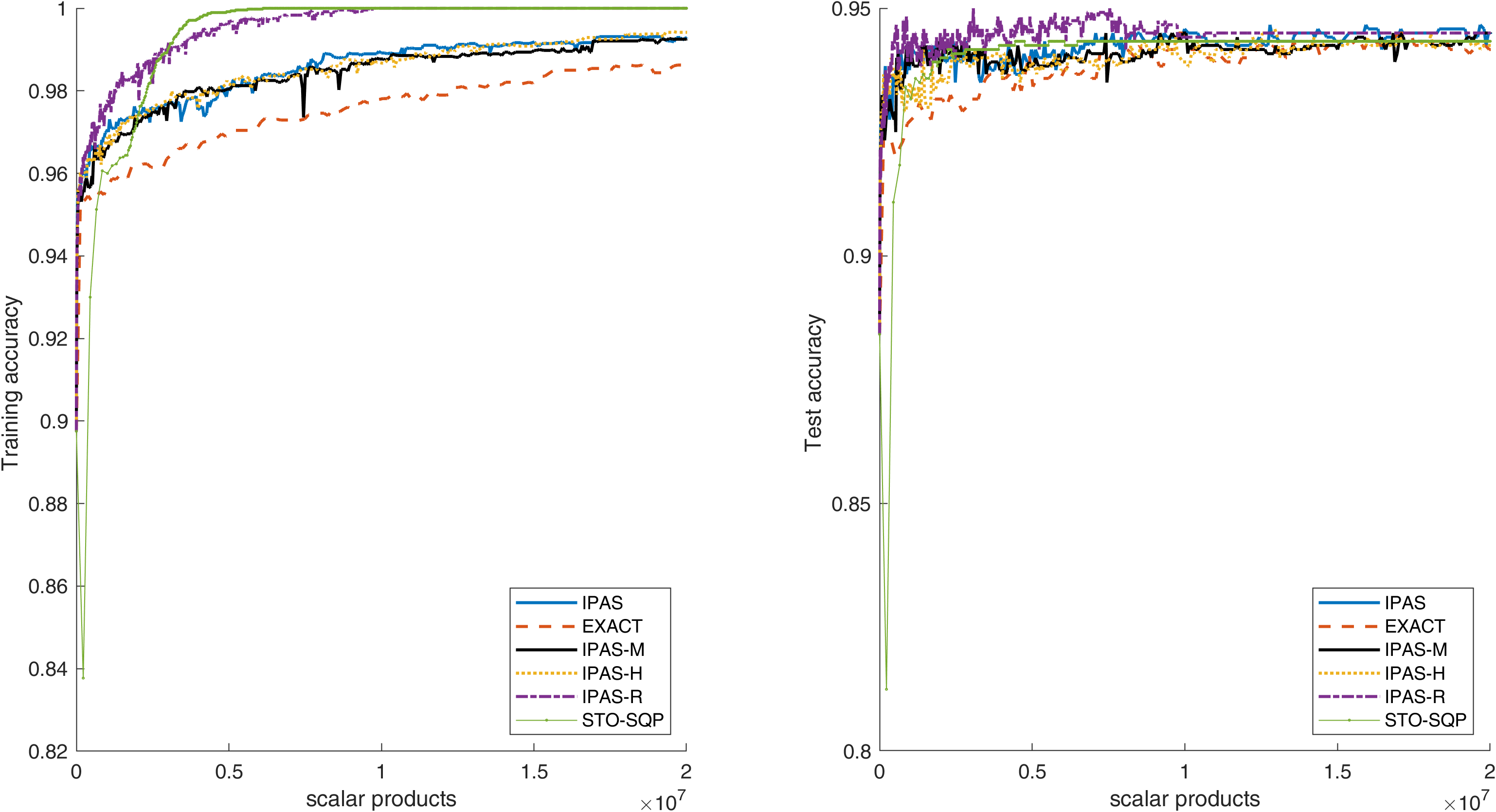} \\
g) & h)
\end{tabular}

\caption{\scriptsize{Model evaluation comparison for different datasets. Training error on the left, and test error on the right. Datasets: a) MUSHROOM ; b) A9A ; c) W7A ; d) MNIST ; e) EPSILON ; f) CIFAR ; g) SVHN ; h) GISETTE} }
\label{grid}
\end{figure}

\newpage

\section{Conclusions}
The proposed algorithm  represents a novel approach for solving weighted sum problems with possibly large number of linear equality constraints. It adapts  additional sampling approach originally constructed for finite sum unconstrained problems to  more general problems of the form \eqref{problem}. 
Moreover, inexact projections are allowed, but controlled by a predefined sequence of parameters. Allowing inexact projections  shows to be very important in terms of computational costs, especially when the number of constraints is large. The almost sure convergence of the proposed method is proved under a set of  standard assumptions for the stochastic framework, without the convexity assumption.   Preliminary numerical results on  a number of machine learning problems with real-world data and  simulated constraints show that IPAS is competitive with the relevant benchmark methods in this field.  Possible future work may include fine-tuning for some special subclasses of the considered problems and finding an optimal sample size increase strategy within the proposed framework.

\section{Funding} 

This research was supported by the Science Fund of the Republic of Serbia, GRANT No 7359, Project title - LASCADO.
\newpage


\end{document}